\documentclass[final,hidelinks]{siamltex}
\usepackage{amsfonts}
\usepackage{amsmath}
\usepackage{amssymb}
\usepackage{color, colortbl}
\usepackage{graphicx}
\graphicspath{{figures/}}
\usepackage{hyperref}
\usepackage{caption}
\usepackage{subcaption}

\title{On the application of Laguerre's method to the polynomial eigenvalue problem}
\author{Thomas R. Cameron\thanks{The College of Idaho} \and Nikolas Steckley\thanks{Steckley \& Associates}}

\newtheorem{ex}{\emph{Example}}[section]
\DeclareMathOperator{\rP}{revP}
\DeclareMathOperator{\ra}{rev\alpha}
\setcounter{topnumber}{2}
\setcounter{bottomnumber}{2}
\setcounter{totalnumber}{4}

\setlength{\floatsep}{5pt plus 2pt minus 2pt}
\setlength{\textfloatsep}{5pt plus 2pt minus 2pt}
\setlength{\intextsep}{5pt plus 2pt minus 2pt}
\begin{document}
\maketitle

\begin{abstract}
The polynomial eigenvalue problem arises in many applications and has received a great deal of attention over the last decade. The use of root-finding methods to solve the polynomial eigenvalue problem dates back to the work of Kublanovskaya (1969, 1970) and has received a resurgence due to the work of Bini and Noferini (2013). In this paper, we present a method which uses Laguerre iteration for computing the eigenvalues of a matrix polynomial. An effective method based on the numerical range is presented for computing initial estimates to the eigenvalues of a matrix polynomial. A detailed explanation of the stopping criteria is given, and it is shown that under suitable conditions we can guarantee the backward stability of the eigenvalues computed by our method. Then, robust methods are provided for computing both the right and left eigenvectors and the condition number of each eigenpair. Applications for Hessenberg and tridiagonal matrix polynomials are given and we show that both structures benefit from substantial computational savings. Finally, we present several numerical experiments to verify the accuracy of our method and its competitiveness for solving the roots of a polynomial and the tridiagonal eigenvalue problem.   
\end{abstract}
\begin{keywords}
Matrix polynomial, polynomial eigenvalue problem, root-finding algorithm, Laguerre's method
\end{keywords}
\begin{AMS}
15A22, 15A18, 47J10, 65F15
\end{AMS}

\pagestyle{myheadings}
\thispagestyle{plain}
\markboth{THOMAS R. CAMERON AND NIKOLAS STECKLEY}{THE POLYNOMIAL EIGENVALUE PROBLEM}

\section{Introduction}
The \emph{polynomial eigenvalue problem} consists of computing the eigenvalues, and often eigenvectors, of an $n\times n$ matrix polynomial of degree $d$:
\begin{equation}\label{eq:matpoly}
P(\lambda)=\underset{i=0}{\overset{d}\sum}\lambda^{i}A_{i}\text{, where }~A_{i}\in\mathbb{C}^{n\times n}~\text{ and }~A_{d}\neq 0.
\end{equation}
An eigenvalue of $P(\lambda)$ is any scalar $\lambda\in\mathbb{C}$ such that $\det P(\lambda)=0$. Any nonzero vector $x\in\ker P(\lambda)$ is an eigenvector corresponding to $\lambda$. The \emph{algebraic multiplicity} of $\lambda$ is its multiplicity as a root of $\det P(\lambda)$, and the \emph{geometric multiplicity} of $\lambda$ is the dimension of $\ker P(\lambda)$. 

Throughout this paper we assume that the matrix polynomial is \emph{regular}, that is, $\det P(\lambda)$ is not the constant zero polynomial, and therefore the set of all eigenvalues is a subset of the extended complex plane with cardinality $nd$. Infinite eigenvalues of~\eqref{eq:matpoly} can occur if the leading coefficient matrix is singular and are defined as the zero eigenvalues of the reversal polynomial,
\begin{equation}\label{eq:revpoly}
\rP(\rho)=\underset{i=0}{\overset{d}\sum}\rho^{d-i}A_{i}.
\end{equation}

Computing an eigenpair $(\lambda,x)$ is useful for a large range of applications~\cite{Betcke2013}. Of extreme importance are special cases of the polynomial eigenvalue problem, such as finding the roots of a scalar polynomial ($n=1$) and solving the linear eigenvalue problem ($d=1$). What's more, the established techniques for solving these special case problems motivate two current approaches for solving the polynomial eigenvalue problem: linearization and root finding methods. 

The linearization of a matrix polynomial results in an equivalent linear eigenvalue problem which is often solved using QZ iteration. Algorithms which adopt this approach have computational complexity $O(d^{3}n^{3})$ and include the popular MATLAB functions QUADEIG~\cite{Hammarling2013} and POLYEIG~\cite{Dedieu2003,Meerbergen2001,Tisseur2000}. More recently, it was shown that exploiting the inherent structure in the companion linearization results in a $O(d^{2}n^{3})$ algorithm~\cite{Aurentz2016}. However, often the original matrix polynomial comes with structure worth exploiting and, in general, the companion linearization does not preserve this structure. Furthermore, the conditioning of the larger linear problem can be worse than the original problem~\cite{Mackey2015}.

To our knowledge, Kublanovskaya was the first to use root-finding methods to solve the polynomial eigenvalue problem~\cite{Kublanovskaya1970} when she suggested the use of the QR decomposition with column pivoting and Newton’s method to compute an eigenvalue of the matrix polynomial. Improvements to this method were given, and quadratic convergence was shown, by Jain, Singhal, and Huseyin~\cite{Jain1983}. More recently, a cubic convergent algorithm using the Ehrlich-Aberth method to compute the eigenvalues of a matrix polynomial was presented~\cite{Bini2013-1}. These root-finding methods are rather inefficient, though, as they compute one eigenvalue at a time, each requiring several $O(n^{3})$ factorizations. However, certain structures in the original problem can be exploited, thus increasing the efficiency of these methods. In addition to preserving the structure, root-finding methods have the advantage of preserving the size and conditioning of the original problem

Root-finding methods exhibit a high level of accuracy, thus making them useful in the context of iterative refinement of computed eigenvalues and eigenvectors. They have been shown to be cost efficient for solving large degree polynomial eigenvalue problems~\cite{Bini2013-1}, and are the driving force behind what is perhaps the fastest and most accurate algorithm for solving the nonsymmetric tridiagonal eigenvalue problem~\cite{Bini2005}. Furthermore, both Laguerre's method and the Ehrlich-Aberth method have been used as an accurate and efficient method for solving the quadratic tridiagonal eigenvalue problem~\cite{Plestenjak2006}. 

In this paper, we propose a root-finding algorithm which uses Laguerre iteration to solve the polynomial eigenvalue problem. Our method is motivated by the previous work of Bini and Noferini~\cite{Bini2013-1, Bini2013-2}, Gary~\cite{Gary1965}, and Parlett~\cite{Parlett1964}. In \S\ref{sec:LMPEP} we present a method for computing the Laguerre iterate of an approximate eigenvalue. We provide robust methods for computing the corresponding right and left eigenvectors, backward error, and condition estimates. Both Hessenberg and tridiagonal structures are considered, and it is shown that Hyman's method can be used to obtain significant computational savings. In \S\ref{subsec:IE} we develop a method based on the numerical range for computing initial estimates to the eigenvalues of a matrix polynomial. Under suitable conditions these initial estimates are no bigger in absolute value than the upper Pellet bounds. Finally, an a priori check for both zero and infinite eigenvalues is implemented and comparisons are made to the approach developed in~\cite{Bini2013-1,Bini2013-2}.

In \S\ref{sec:stability} we discuss the stability of our method.
Specifically, we show that our method is robust against overflow and that under suitable conditions we can guarantee the backward stability of the eigenvalues computed. In \S\ref{sec:NE} numerical experiments are provided to verify the accuracy and cost analysis of our method. Additionally, comparisons are made to the methods in~\cite{Aurentz2015,Bini1996} and~\cite{Bini2005,Plestenjak2006} to verify the effectiveness of our method for computing the roots of a polynomial and solving the tridiagonal polynomial eigenvalue problem, respectively.

\section{Laguerre's method applied to the polynomial eigenvalue problem}\label{sec:LMPEP} 
Laguerre's method has a rich history originating with the work of Edmond Laguerre~\cite{Laguerre1898}. Laguerre's method has incredible virtues including guaranteed global convergence when all roots are real~\cite{Acton1970}, and when these are simple zeros this method is known to exhibit local cubic convergence. In practice, the complex iterations seem as powerful as the real one's~\cite{Parlett1964}. Both Numerical Recipes (zroots) and the NAG 77 Library (C02AFF) employ a modified Laguerre method to compute the roots of a scalar polynomial. In 1964-65, Laguerre's method was applied to the linear eigenvalue problem, both in the monic~\cite{Parlett1964} and non-monic~\cite{Gary1965} cases. Now we apply Laguerre's method to the polynomial eigenvalue problem.

Since $P(\lambda)$ is assumed to be regular, the polynomial $p(\lambda)=\det P(\lambda)$ has at most $N_{1}\leq nd$ roots, where $N_{1}+N_{2}=nd$ and $N_{2}$ is the number of infinite eigenvalues. Given an approximation $\lambda$ to one of the roots of $p(\lambda)$, Laguerre's method uses $p(\lambda)$, $p^{'}(\lambda)$, and $p^{''}(\lambda)$ to obtain a better approximation. Following the development in~\cite{Parlett1964}, we define the following:
\begin{equation}\label{eq:S1}
S_{1}(\lambda)=\frac{p^{'}(\lambda)}{p(\lambda)}=\underset{i=1}{\overset{N_{1}}\sum}\frac{1}{\lambda-r_{i}},
\end{equation}
where $r_{1},\ldots,r_{N_{1}}$ are the roots of $p(\lambda)$, and
\begin{equation}\label{eq:S2}
S_{2}(\lambda)=-\left(\frac{p^{'}(\lambda)}{p(\lambda)}\right)^{'}=\underset{i=1}{\overset{N_{1}}\sum}\frac{1}{(\lambda-r_{i})^{2}}.
\end{equation}
Then the next approximation is given by
\begin{equation}\label{eq:lit}
\hat{\lambda}=\lambda-\frac{N_{1}}{S_{1}\pm\sqrt{(N_{1}-1)(N_{1}S_{2}-S_{1}^{2})}},
\end{equation}
where the sign of the square root is chosen to maximize the magnitude of the denominator. We call $\hat{\lambda}$ the \emph{Laguerre iterate} of $\lambda$. Once the roots $r_{1},\ldots,r_{k}$ have been found, we deflate the problem by subtracting
\[
\underset{i=1}{\overset{k}\sum}\frac{1}{\lambda-r_{i}}~\text{ and }~\underset{i=1}{\overset{k}\sum}\frac{1}{(\lambda-r_{i})^{2}}
\]
from equations~\eqref{eq:S1} and~\eqref{eq:S2}, respectively.

The undesirable numerical properties of the determinant are well-known, and it is for these reasons that we do not work with the polynomial $p(\lambda)$ directly. Rather, an effective method for computing equations~\eqref{eq:S1}--\eqref{eq:S2} can be derived from Jacobi's formula:
\begin{equation}\label{eq:clit}
\begin{split}
\frac{p^{'}(\lambda)}{p(\lambda)}&=\text{trace}\left(X_{1}(\lambda)\right),\\
-\left(\frac{p^{'}(\lambda)}{p(\lambda)}\right)^{'}&=\text{trace}\left(X_{1}^{2}(\lambda)-X_{2}(\lambda)\right),
\end{split}
\end{equation}
where $P(\lambda)X_{1}(\lambda)=P^{'}(\lambda)$ and $P(\lambda)X_{2}(\lambda)=P^{''}(\lambda)$. The first formula in~\eqref{eq:clit} can be found in~\cite{Bini2013-1}. The second formula follows from the first by using the derivative product rule and noting that $\left(P^{-1}(\lambda)\right)^{'}P^{'}(\lambda)=-X_{1}^{2}(\lambda)$. Note that only the diagonal entries of $X_{1}(\lambda)^{2}$ are needed in~\eqref{eq:clit}, which is significantly less expensive than computing the matrix product.

In general, the method we propose begins with initial estimates to the eigenvalues of the matrix polynomial. Then, proceeding one at a time, the Laguerre iteration of each eigenvalue approximation is computed, which requires solving the matrix equations in~\eqref{eq:clit}. Each eigenvalue is updated until at least one of the stopping criteria are met (see \S\ref{sec:ESC}). Locally, if the root is simple, convergence is cubic; otherwise, it is linear. Furthermore, in practice, the total number of iterations needed to compute all eigenvalues is proportional to the product $nd$; therefore our method has computational complexity $O(dn^{4}+d^{2}n^{3})$.

In \S\ref{sec:HessTri}, we show that significant computational savings can be obtained from Hyman's method for both Hessenberg and tridiagonal matrix polynomials. In addition, the general method can easily be specialized for scalar polynomials, and we are left with a method that has computational complexity $O(d^{2})$. 

\subsection{Eigenvectors, Stopping Criteria, and Condition Numbers}\label{sec:ESC}
Denote by $\lambda\in\mathbb{C}$ an approximate eigenvalue, and define
\begin{equation}\label{eq:qrf}
QR=P(\lambda)E,
\end{equation}
where $E$ is a permutation matrix such that $|r_{11}|\geq\cdots\geq|r_{nn}|$. If $|r_{nn}|<\tau$, where $\tau$ is some predetermined tolerance, then we say that the approximate eigenvalue has converged. This constitutes our first stopping criterion. In \S\ref{sec:bs} we define $\tau$ and show that the first stopping criterion guarantees that the backward error in the approximate eigenpair is very small. 

Given that $\lambda$ has converged, we compute the corresponding right and left eigenvectors by
\begin{equation}\label{eq:ev1}
x=E\hat{x}~\text{ and }~y=Qe_{n},
\end{equation}
where
\[
R(1:n-1,1:n-1)\hat{x}(1:n-1)=-R(1:n-1,n)
\]
$\hat{x}(n)=1$ and $e_{n}$ is the $nth$ standard basis vector.

This approach works well when $|r_{nn}|<\tau$, and while this is sufficient to guarantee that the approximate eigenvalue has converged, it is not necessary. Indeed, there exist upper triangular matrices that are ``nearly'' rank deficient, yet none of the main diagonal entries are extremely small~\cite{Watkins2010}. For this reason, we introduce a second stopping criterion based on an upper bound estimation of the backward error in the eigenvalue approximation.

If an approximate eigenvector has not been computed, then it follows from~\cite{Tisseur2000}[Lemma 3] that for any nonzero vector $b\in\mathbb{C}^{n}$ the backward error in the eigenvalue approximation is bounded above by
\begin{equation}\label{eq:berrUB}
\frac{\left\Vert b \right\Vert_{2}}{\alpha\left\Vert P(\lambda)^{-1}b \right\Vert_{2}},
\end{equation}
where $\alpha=\sum_{i=0}^{d}|\lambda|^{i}\left\Vert A_{i} \right\Vert_{2}$.

Suppose $\lambda$ is an approximate eigenvalue such that the upper bound on its backward error, and therefore its backward error, is less than double precision \emph{unit roundoff}: $\epsilon=2^{-53}$; then we say that $\lambda$ has converged. This constitutes our second stopping criterion. In practice we take the min of~\eqref{eq:berrUB} over three nonzero vectors $b\in\mathbb{C}^{n}$.

If none of the diagonal entries in the matrix $R$ are less than $\tau$, then~\eqref{eq:ev1} is not suitable for computing the corresponding eigenvectors. Rather, we compute the singular vectors corresponding to the smallest singular value of $P(\lambda)$. With the QR factorization with column pivoting in~\eqref{eq:qrf}, we apply inverse iteration to
\begin{equation}\label{eq:inviter}
E(R^{*}R)E^{T}~\text{ and }~Q(RR^{*})Q^{*},
\end{equation}
to compute the right and left singular vectors, respectively. Our experience indicates that using~\eqref{eq:ev1} to form initial estimates for the inverse iterations results in quick convergence to excellent eigenvector approximations.

Now, suppose that $\lambda$ is an approximate eigenvalue which satisfies
\begin{equation}\label{eq:ssc}
|\hat{\lambda}-\lambda|<\epsilon|\lambda|
\end{equation}
where $\hat{\lambda}$ is the Laguerre iterate defined in~\eqref{eq:lit}. Then no significant change to the current eigenvalue approximation is made, and we say that $\lambda$ has converged. This constitutes our third stopping criterion. In this case, or in the case where some predefined maximum number of iterations has been reached, we cannot make a strong statement about the approximations backward error. At this point, our best option is to proceed by computing the singular vectors corresponding to the smallest singular value of $P(\lambda)$ using the inverse iteration described in~\eqref{eq:inviter}.  

In summary, given an approximate eigenvalue, we compute the QR factorization with column pivoting in~\eqref{eq:qrf}. If any of the three stopping criteria are met, or the maximum number of iterations is reached, then we cease to update the eigenvalue approximation and compute corresponding right and left eigenvectors. Otherwise, we use the QR factorization to update the eigenvalue approximation by solving~\eqref{eq:clit} and computing the Laguerre iterate. 

Several remarks are in order. First, the norm of the matrix coefficients are only computed once, and in practice, we replace the matrix 2-norm with the Frobenius norm. Second, the definition we've given for $\alpha$ in~\eqref{eq:berrUB} results in a relative normwise measurement of the backward error (see \S\ref{sec:bs}). Finally, we note that the addition of computing the eigenvectors for each approximate eigenvalue has not changed the computational complexity of our method, which is $O(dn^{4}+d^{2}n^{3})$.

Once the approximate eigenvalue has converged and the corresponding right and left eigenvectors have been computed, we report each eigenvalue's condition number. It follows from~\cite{Tisseur2000}[Theorem 5], that the normwise condition number of a nonzero finite simple eigenvalue is given by
\begin{equation}\label{eq:con}
\kappa(\lambda,P)=\frac{\alpha\left\Vert x \right\Vert_{2}\left\Vert y \right\Vert_{2}}{|\lambda||y^{*}P^{'}(\lambda)x|}.
\end{equation}
For simple zero and infinite eigenvalues, we report
\[
\frac{\left\Vert x \right\Vert_{2}\left\Vert y \right\Vert_{2}}{|y^{*}x|}
\]
as the condition number, where $x$ and $y$ are right and left eigenvectors corresponding to the zero eigenvalues of the matrices $A_{0}$ and $A_{d}$, respectively.

\subsection{Initial Estimates}\label{subsec:IE}
A root-finding method's performance is greatly influenced by its initial estimates. In~\cite{Bini2013-1,Bini2013-2}, it is suggested to use the Newton polygon of a polynomial formed from the norm of the coefficient matrices to obtain initial estimates to the eigenvalues of the matrix polynomial $P(\lambda)$. In this section, we review this Newton polygon approach, since we will use it to form initial estimates in the scalar case. However, for matrix polynomials we propose a new method, motivated by the numerical range of the matrix polynomial, for computing the initial estimates. 

\subsubsection{Newton Polygon}\label{subsubsec:NP}
The Newton polygon approach works by placing initial estimates on circles of suitable radii. We quantify what constitutes suitable radii from the Pellet bounds for matrix polynomials.  
\begin{theorem}\label{thm:Pellet}
Let $P(\lambda)$ be an $n\times n$ matrix polynomial of degree $d\geq 2$, where $A_{0}\neq 0$. For each $k\in\left\{0,1,\ldots,d\right\}$ such that $A_{k}$ is nonsingular, consider the equation
\begin{equation}\label{eq:Pellet}
\left\Vert A_{k}^{-1}\right\Vert^{-1}\mu^{k}=\underset{i\neq k}{\sum}\left\Vert A_{i}\right\Vert \mu^{i},
\end{equation}
where $\left\Vert \cdot\right\Vert$ is any induced matrix norm.
\begin{romannum}
\item If $k=0$ there exists one real positive solution $r$, and $P(\lambda)$ has no eigenvalues of moduli less than $r$.
\item If $0<k<d$ there are either no real positive solutions or two real positive solutions $r_{1}\leq r_{2}$. In the latter case, $P(\lambda)$ has no eigenvalues in the annulus $\mathcal{A}(r_{1},r_{2})=\left\{z\in\mathbb{C} : r_{1}<|z|<r_{2}\right\}$.
\item If $k=d$, then there exists one real positive solution $R$, and $P(\lambda)$ has no eigenvalues of moduli greater than $R$. 
\end{romannum}
\end{theorem}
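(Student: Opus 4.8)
The plan is to reduce both halves of the statement to the analysis of a single scalar function $h_{k}(\mu)=\sum_{i\neq k}\|A_{i}\|\,\mu^{i-k}$ of $\mu=|\lambda|>0$, obtained by dividing~\eqref{eq:Pellet} through by $\mu^{k}$, so that the equation becomes $h_{k}(\mu)=\|A_{k}^{-1}\|^{-1}$. First I would settle the root structure from the shape of $h_{k}$. Every summand $\mu^{i-k}$ has an integer exponent $i-k\neq 0$, and since no integer lies in $(0,1)$ each such power is convex on $(0,\infty)$; the term $i=0$, present because $A_{0}\neq 0$, has exponent $-k$ and is strictly convex once $k\ge 1$, so $h_{k}$ is strictly convex for $0<k\le d$. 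The growth of the leading term ($A_{d}\neq 0$) forces $h_{k}(\mu)\to\infty$ as $\mu\to\infty$, and the $i=0$ term forces $h_{k}(\mu)\to\infty$ as $\mu\to 0^{+}$ whenever $k>0$. Hence for $k=0$ the function $h_{0}$ is strictly increasing from $0$ to $\infty$ and for $k=d$ it is strictly decreasing from $\infty$ to $0$, each giving a unique positive root ($r$ and $R$ respectively), while for $0<k<d$ strict convexity together with the two infinite limits gives a unique minimizer and therefore zero or exactly two positive roots $r_{1}\le r_{2}$ (with $r_{1}=r_{2}$ in the tangential case), according as that minimum exceeds or falls below $\|A_{k}^{-1}\|^{-1}$.

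For the eigenvalue-free assertions I would pass from $\det P(\lambda)\neq 0$ to an invertibility bound. For $\lambda\neq 0$ factor $P(\lambda)=\lambda^{k}A_{k}\bigl(I+M(\lambda)\bigr)$ with $M(\lambda)=\sum_{i\neq k}\lambda^{i-k}A_{k}^{-1}A_{i}$; since $A_{k}$ is nonsingular, $\lambda^{k}A_{k}$ is invertible, so $P(\lambda)$ is invertible precisely when $I+M(\lambda)$ is. Submultiplicativity of the induced norm and the triangle inequality yield $\|M(\lambda)\|\le\|A_{k}^{-1}\|\sum_{i\neq k}|\lambda|^{i-k}\|A_{i}\|=\|A_{k}^{-1}\|\,h_{k}(|\lambda|)$, so whenever $h_{k}(|\lambda|)<\|A_{k}^{-1}\|^{-1}$ we get $\|M(\lambda)\|<1$ and $I+M(\lambda)$ is invertible by its Neumann series. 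Thus $\lambda$ cannot be an eigenvalue on any set where $h_{k}(\mu)<\|A_{k}^{-1}\|^{-1}$.

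It then remains to read off those sets from the first step: $h_{0}(\mu)<\|A_{0}^{-1}\|^{-1}$ exactly on $[0,r)$, giving the disk $|\lambda|<r$; $h_{d}(\mu)<\|A_{d}^{-1}\|^{-1}$ exactly on $(R,\infty)$, giving $|\lambda|>R$; and, in the two-root case, $h_{k}(\mu)<\|A_{k}^{-1}\|^{-1}$ exactly on the open interval $(r_{1},r_{2})$, giving the open annulus $\mathcal{A}(r_{1},r_{2})$. I expect the delicate point to be the middle case $0<k<d$: one must guarantee not merely $h_{k}\le\|A_{k}^{-1}\|^{-1}$ but the \emph{strict} inequality on the open interval between the roots, because only the strict bound delivers $\|M(\lambda)\|<1$ and hence invertibility throughout the annulus. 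The strict convexity contributed by the $A_{0}\neq 0$ term is exactly what supplies this; without a strictly convex term one could only conclude $\|M(\lambda)\|\le 1$, which is insufficient.
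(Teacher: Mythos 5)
Your proposal is correct, but there is no internal proof to compare it against: immediately after the statement the paper defers to the literature, writing that ``A proof of Theorem~\ref{thm:Pellet} can be found in~\cite{Cameron2015, Melman2013}.'' Your argument is a sound, self-contained reconstruction, and it differs in mechanism from the standard proofs in those references, which argue along an eigenvector: from $P(\lambda)x=0$ with $\left\Vert x\right\Vert=1$ one gets $\left\Vert A_{k}^{-1}\right\Vert^{-1}|\lambda|^{k}\leq\left\Vert A_{k}x\right\Vert|\lambda|^{k}\leq\sum_{i\neq k}\left\Vert A_{i}\right\Vert|\lambda|^{i}$, using the fact that $\min_{\left\Vert x\right\Vert=1}\left\Vert A_{k}x\right\Vert=\left\Vert A_{k}^{-1}\right\Vert^{-1}$ for induced norms, so every eigenvalue modulus satisfies $h_{k}(|\lambda|)\geq\left\Vert A_{k}^{-1}\right\Vert^{-1}$ and the exclusion regions are exactly where this fails. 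Your factorization $P(\lambda)=\lambda^{k}A_{k}\bigl(I+M(\lambda)\bigr)$ with the Neumann-series bound is essentially the dual formulation of the same estimate; it buys slightly more (invertibility of $P(\lambda)$ throughout the region, not merely the absence of null vectors), and your convexity analysis of $h_{k}$, with strictness supplied by the $A_{0}\neq 0$ term, correctly isolates the point that makes the annulus case work --- the cited proofs handle the root structure of the scalar majorant in the same way. Two trivial tidying remarks: your factorization is stated only for $\lambda\neq 0$, so in the case $k=0$ you should either note that no negative powers occur (hence the factorization is valid at $\lambda=0$ as well) or simply observe that $P(0)=A_{0}$ is nonsingular by hypothesis; and it is worth saying explicitly that the tangential case $r_{1}=r_{2}$ yields an empty annulus, so the exclusion claim there is vacuous. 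Neither affects correctness.
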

\noindent
A proof of Theorem~\ref{thm:Pellet} can be found in~\cite{Cameron2015, Melman2013}. Moreover, it was noted in~\cite{Bini2013-2} that the bounds in Theorem~\ref{thm:Pellet} can be sharpened if~\eqref{eq:Pellet} is replaced by 
\begin{equation}\label{eq:Pellet2}
\mu^{k}=\underset{i\neq k}{\sum}\left\Vert A_{k}^{-1}A_{i}\right\Vert \mu^{i}.
\end{equation}

Let $k_{0},\ldots,k_{q}$ be values of $k$ such that $A_{k}$ is nonsingular and there exists real positive solution(s) $s_{k_{i}}\leq t_{k_{i}}$ to~\eqref{eq:Pellet2}. Then $t_{k_{i-1}}\leq s_{k_{i}}$ for $i=1,\ldots,q$, and there are $n(k_{i}-k_{i-1})$ eigenvalues of $P(\lambda)$ in the closure of the annulus $\mathcal{A}(t_{k_{i}-1},s_{k_{i}})$. If for $k=0$ or $k=d$ the matrix $A_{k}$ is singular, then $t_{0}=0$ or $s_{d}=\infty$, respectively. Computing the value of $s_{k}$ and $t_{k}$ is expensive, since it requires solving several matrix and polynomial equations. However, a cheap algorithm for approximating $s_{k}$ and $t_{k}$ was proposed in~\cite{Melman2014}. 

For the scalar case ($n=1$) there is an alternative to computing the values of $s_{k}$ and $t_{k}$. Consider the polynomial $w(\lambda)=\underset{i=0}{\overset{d}\sum}a_{i}\lambda^{i}$, where $a_{0}a_{d}\neq 0$. The \emph{Newton polygon} associated with this polynomial is the upper convex hull of the discrete set $\left\{(i,\log|a_{i}|):i=0,1,\ldots,d\right\}$. Let $0=k_{0}<k_{1}<\cdots<k_{q}=d$ denote the abscissas of the vertices of the Newton polygon, and define the radii
\begin{equation}\label{eq:npradii}
r_{i}=\left|\frac{a_{k_{i-1}}}{a_{k_{i}}}\right|^{\frac{1}{k_{i}-k_{i-1}}},
\end{equation}
for $i=1,\ldots,q$. Then $(k_{i}-k_{i-1})$ initial estimates to the roots of $w(\lambda)$ are placed on circles centered at $0$ with radius $r_{i}$. In~\cite[Theorem 1.2]{Bini2013-2} they show that these estimates lie within the Pellet bounds for the polynomial $w(\lambda)$, and in~\cite{Bini1996} they establish the efficiency of these initial estimates for solving the roots of a polynomial. 

In~\cite{Bini2013-1,Bini2013-2} they generalize this approach for a specific class of matrix polynomials, and in~\cite{Noferini2015} to general matrix polynomials. In practice, the idea is simple. Let 
\[
w(\lambda)=\underset{i=0}{\overset{d}\sum}\left\Vert A_{i}\right\Vert \lambda^{i}.
\] 
Then, $n(k_{i}-k_{i-1})$ initial estimates to the eigenvalues of $P(\lambda)$ are placed on circles centered at zero with radius $r_{i}$, for $i=1,\ldots,q$, where both $k_{i}$ and $r_{i}$ are defined as in~\eqref{eq:npradii} with reference to the Newton polygon associated with the polynomial $w(\lambda)$.

\subsubsection{Numerical Range}\label{subsubsec:NR}
The \emph{numerical range} of a matrix polynomial is the set
\begin{equation}\label{eq:NR}
W(P)=\{\lambda\in\mathbb{C}\colon x^{*}P(\lambda)x=0\text{, for some nonzero vector } x\in\mathbb{C}^{n}\}
\end{equation}
which clearly contains the set of all eigenvalues. Under suitable conditions, see Theorem~\ref{thm:SAMPIE}, the roots of the quadratic form $x^{*}P(\lambda)x$, where $x\in\mathbb{C}^{n}$ is of unit length, are no bigger in absolute value than the upper Pellet bound, see Theorem~\ref{thm:Pellet}. In practice, we make use of the columns of $Q=[q_{j}]_{j=1}^{n}$, already obtained from the QR factorization of the constant and leading coefficient matrices, see \S\ref{sec:zeroinf}. Initial estimates to the finite eigenvalues are computed as the roots of $q_{j}^{*}P(\lambda)q_{j}$ for $j=1,\ldots,n$.

If $P(z)=zI-A$, then $W(P)$ coincides with the classical numerical range (field of values) of the matrix $A$, which has wonderful properties including convexity and connectedness. In general, however, the numerical range of a matrix polynomial need not have these properties and is bounded if and only if the field of values of the leading coefficient matrix does not contain the origin. For a detailed introduction to the numerical range of a matrix polynomial and its geometric properties see~\cite{Li1994}. 

It is highly nontrivial to give a complete description of the set $W(P)$. Despite this, we have experienced great success using elements from the numerical range as initial estimates for the eigenvalues we wish to compute. This seems to be a consequence of the habitual nature of elements from the numerical range to adhere to the geometric structure of the spectrum. To exemplify this statement, consider the hyperbolic matrix polynomial $P(\lambda)$, which by definition has a numerical range that satisfies $W(P)\subset\mathbb{R}$. Then, it is clearly advantageous to use initial estimates from the numerical range over elements on a circle in the complex plane.

Even more revealing, the numerical range of a hyperbolic matrix polynomial is split into $d$ ``spectral regions'' each containing a root of $x^{*}P(\lambda)x$. Each spectral region is an interval (possibly degenerate) on the real line that contains $n$ eigenvalues of $P(\lambda)$~\cite{Li1994}. In general, singling out a part of $W(P)$ containing precisely $k$ roots of $x^{*}P(\lambda)x$ for any unit vector $x\in\mathbb{C}^{n}$ and separated from the rest of $W(P)$ by a circle establishes the existence of a spectral divisor of order $k$ whose spectrum lies in that region~\cite{Markus1988}[\S~26.4]. For simplicity, we also reference this region as a spectral region.

In what follows, we provide three example problems from the NLEVP package~\cite{Betcke2013} to illustrate the potential competitive advantage to be had from using the numerical range. The first two examples are of hyperbolic matrix polynomials, but the third is not. In each case, it is clear that the roots of the quadratic form are adhering to some spectral region in the plane. Each example contains a plot of the initial estimates using both the numerical range and Newton polygon, as well as the approximated eigenvalues.

\newpage
\begin{ex}[Spring]\label{ex:spring} 
\begin{center}
\includegraphics[width=\textwidth]{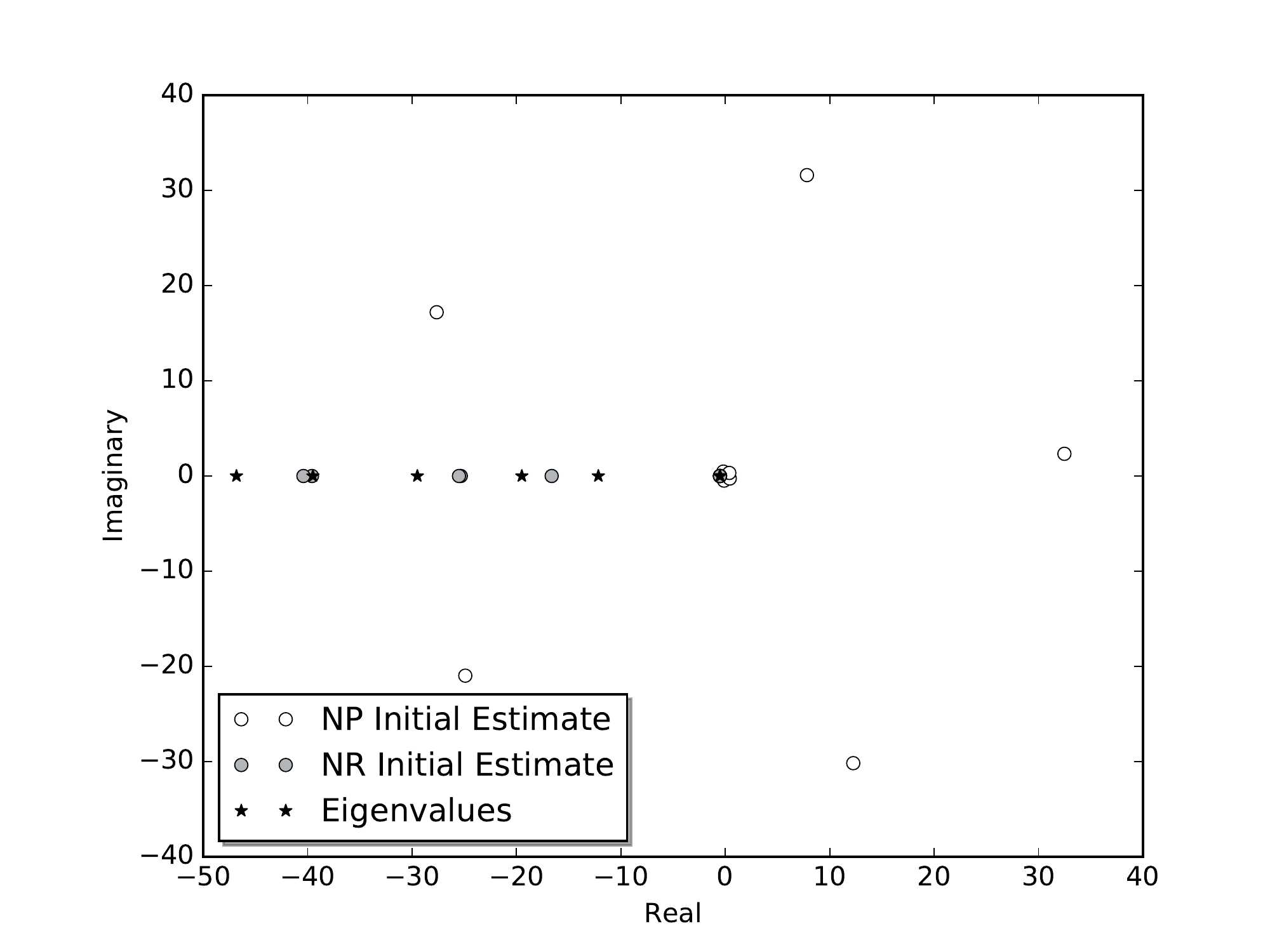}
\end{center}
\end{ex}

\begin{ex}[CD Player]\label{ex:cd_player}
\begin{center}
\includegraphics[width=\textwidth]{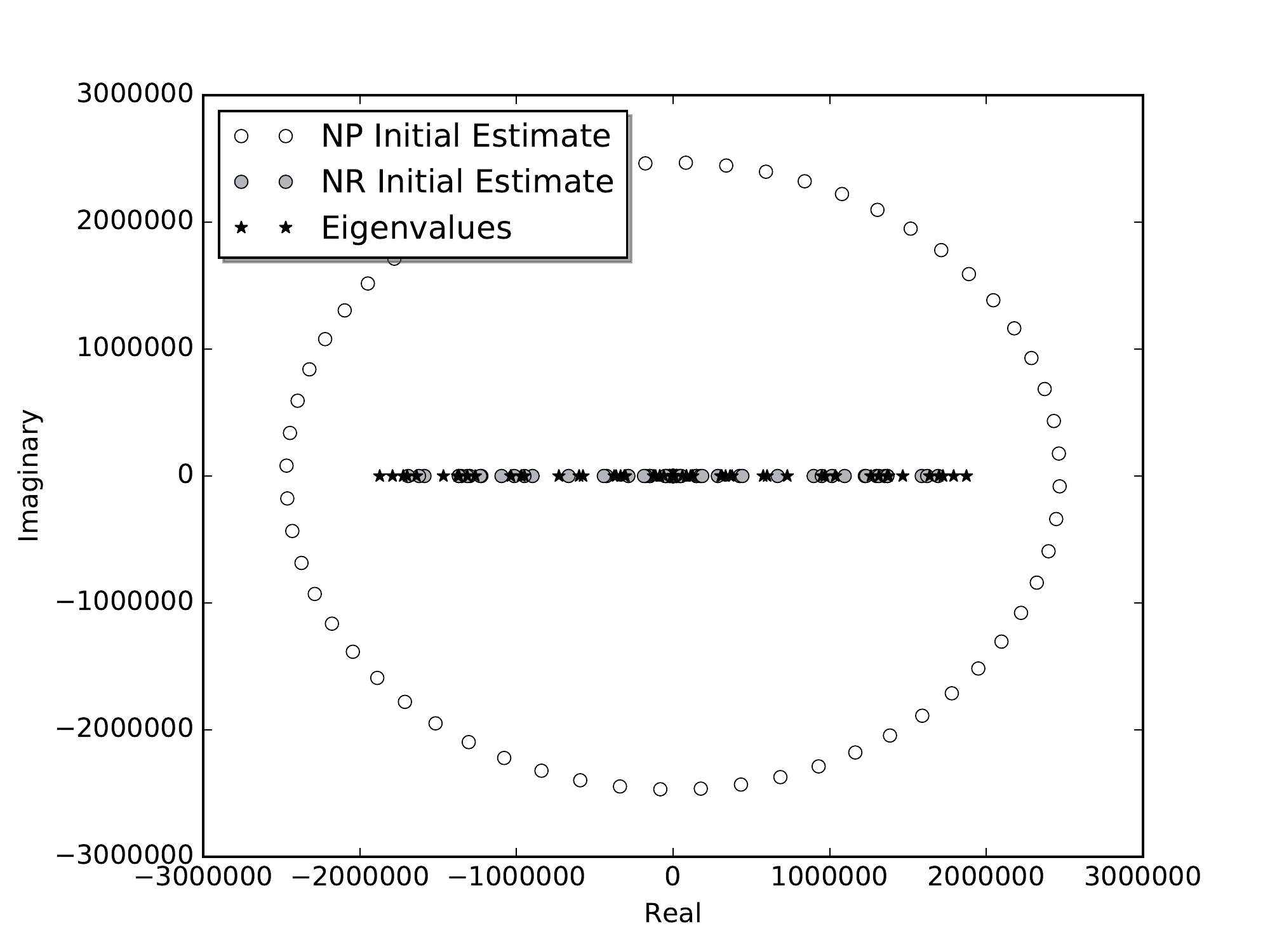}
\end{center}
\end{ex}
The earlier examples highlight the advantage the numerical range has to offer, especially when the eigenvalues are real. This advantage leads to cutting the computation time in half when solving the Spring problem, and by a quarter when solving the CD Player problem. In the following example, the eigenvalues are complex, but the advantage of the numerical range is still evident. Note how the elements from the numerical range clearly identify the $4$ spectral regions in the complex plane.

\begin{ex}[Butterfly]\label{ex:butterfly} 
\begin{center}
\includegraphics[width=\textwidth]{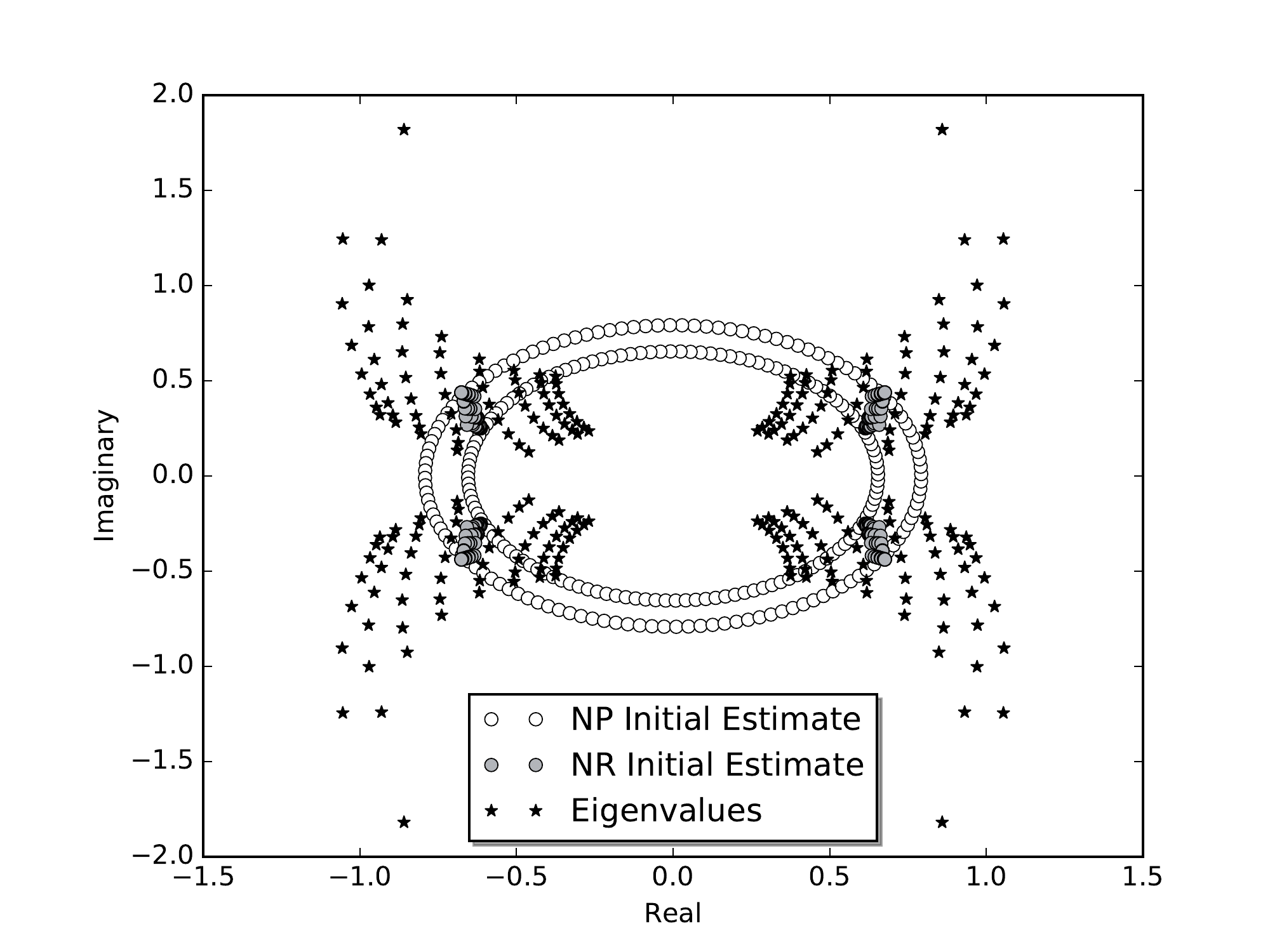}
\end{center}
\end{ex}

Not only do the elements of the numerical range adhere to the spectrum better than points on a circle in the complex plane, they are often, in practice, within the Pellet bounds from Theorem~\ref{thm:Pellet}. We can make the following precise statement.
\begin{theorem}\label{thm:SAMPIE}
Let $P(\lambda)$ be a self-adjoint matrix polynomial. Then for any $\lambda\in W(P)$, $|\lambda|$ is no bigger than the upper Pellet bound.
\end{theorem}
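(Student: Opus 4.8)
The plan is to reduce the matrix statement to a scalar Pellet bound taken along a fixed direction $x$, and then to compare the resulting scalar bound with the upper Pellet bound of $P$. Fix $\lambda\in W(P)$. By \eqref{eq:NR} there is a unit vector $x$ with $x^{*}P(\lambda)x=0$. Writing $a_{i}=x^{*}A_{i}x$, self-adjointness (each $A_{i}=A_{i}^{*}$) forces every $a_{i}$ to be \emph{real}, so that $x^{*}P(\lambda)x=\sum_{i=0}^{d}a_{i}\lambda^{i}=0$ exhibits $\lambda$ as a root of the \emph{real} scalar polynomial $w_{x}(\mu)=\sum_{i=0}^{d}a_{i}\mu^{i}$. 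The task thus becomes bounding the roots of $w_{x}$ by the upper Pellet bound $R$ of $P$.

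Next I would record two elementary estimates relating the scalar coefficients $a_{i}$ to the matrix norms in \eqref{eq:Pellet}, using the spectral norm throughout (the Pellet bounds hold for any induced norm, so this specialization is permitted). For a unit vector $x$ one has $|a_{i}|=|x^{*}A_{i}x|\le\|A_{i}\|_{2}$ for every $i$, while for the leading coefficient $|a_{d}|=|x^{*}A_{d}x|\ge\sigma_{\min}(A_{d})=\|A_{d}^{-1}\|_{2}^{-1}$. This lower bound is where the structural hypothesis is essential, and I expect it to be the main obstacle: it requires $A_{d}$ to be definite, so that the Hermitian form $x^{*}A_{d}x$ stays bounded away from zero by the smallest singular value. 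Equivalently $0\notin W(A_{d})$, which is precisely the condition guaranteeing that $W(P)$ is bounded and hence that an upper Pellet bound is meaningful; if $A_{d}$ were indefinite there would be unit vectors with $a_{d}=0$ and $W(P)$ would be unbounded, so the definite case is the only one in which the statement has content.

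I would then apply the scalar Pellet bound (the $k=d$ case) to $w_{x}$. Since $a_{d}\ne 0$, every root satisfies $|\lambda|\le R_{x}$, where $R_{x}$ is the unique positive root of $|a_{d}|\mu^{d}=\sum_{i<d}|a_{i}|\mu^{i}$. This is immediate from the triangle inequality: if $|\lambda|>R_{x}$ then $|a_{d}|\,|\lambda|^{d}>\sum_{i<d}|a_{i}|\,|\lambda|^{i}\ge\bigl|\sum_{i<d}a_{i}\lambda^{i}\bigr|$, contradicting $a_{d}\lambda^{d}=-\sum_{i<d}a_{i}\lambda^{i}$.

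Finally I would compare $R_{x}$ with the upper Pellet bound $R$ of $P$, the positive root of $\|A_{d}^{-1}\|_{2}^{-1}\mu^{d}=\sum_{i<d}\|A_{i}\|_{2}\mu^{i}$ from Theorem~\ref{thm:Pellet}. Setting $f_{x}(\mu)=|a_{d}|\mu^{d}-\sum_{i<d}|a_{i}|\mu^{i}$ and $f(\mu)=\|A_{d}^{-1}\|_{2}^{-1}\mu^{d}-\sum_{i<d}\|A_{i}\|_{2}\mu^{i}$, the estimates of the second step give $f_{x}(\mu)\ge f(\mu)$ for all $\mu>0$. Dividing each by $\mu^{d}$ yields a strictly increasing function of $\mu$, so $f_{x}$ and $f$ each have a unique positive root ($R_{x}$ and $R$) and change sign from negative to positive there. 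Evaluating at $R_{x}$ gives $f(R_{x})\le f_{x}(R_{x})=0$, and the sign structure of $f$ then forces $R_{x}\le R$. Combining with the previous step, $|\lambda|\le R_{x}\le R$, which is exactly the assertion that $|\lambda|$ is no bigger than the upper Pellet bound.
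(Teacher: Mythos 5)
Your proof is correct and takes essentially the same route as the paper's: bound the quadratic-form coefficients via self-adjointness, $|x^{*}A_{i}x|\le\Vert A_{i}\Vert_{2}$, apply the scalar Pellet bound ($k=d$ case) to $x^{*}P(\lambda)x$, and compare its positive root with that of the matrix Pellet equation through a monotonicity argument. You are in fact more careful than the paper at the one delicate point: the paper never establishes the leading-coefficient inequality $|x^{*}A_{d}x|\ge\Vert A_{d}^{-1}\Vert_{2}^{-1}$ that the root comparison genuinely requires (its final step ``$R^{m}\leq\hat{R}^{m}$ by Theorem~\ref{thm:Pellet}'' silently assumes it), whereas you supply this bound and correctly note that it forces $A_{d}$ to be definite --- precisely the condition under which $W(P)$ is bounded and the theorem has content.
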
 
\begin{proof}
Let $x\in\mathbb{C}^{n}$ be a vector with unit length. The upper Pellet bound on the roots of the polynomial $x^{*}P(\lambda)x$ is the unique real positive solution to the equation
\[
|x^{*}A_{d}x|\mu^{d}=\sum_{i=0}^{d-1}\mu^{i}|x^{*}A_{i}x|.
\]
For any self-adjoint matrix $A$, it is well-know that $\left\Vert A\right\Vert_{2}=\sup\limits_{x^{*}x=1}|x^{*}Ax|$. Therefore,
\[
|x^{*}A_{d}x|\mu^{d}\leq\sum_{i=0}^{d-1}\mu^{i}\left\Vert A_{i}\right\Vert_{2}.
\]
Let $R$ denote the upper bound on the roots of $x^{*}P(\lambda)x$ and $\hat{R}$ denote the upper bound on the eigenvalues of $P(\lambda)$. Then, by Theorem~\ref{thm:Pellet}, $R^{m}\leq\hat{R}^{m}$ and the result follows. 
\end{proof}

\subsubsection{Zero and Infinite Eigenvalues}\label{sec:zeroinf}
Laguerre's method experiences local cubic convergence if the root is simple; otherwise, convergence is linear. In practice, it is most common to have multiple zero and infinite eigenvalues. Therefore to avoid poor performance when dealing with multiple roots, we employ an a priori identification of zero and infinite eigenvalues. During this identification process, we assume that the zero and infinite eigenvalues are semi-simple and thus our problem turns into a familiar one: to determine the rank of the matrices $A_{0}$ and $A_{d}$.

In order to determine the rank of a matrix $A$, we perform a QR factorization with column pivoting. Let $QR=AE$, where \[R=\begin{bmatrix}R_{11} & R_{12}\\ 0 & R_{22}\end{bmatrix}\] $R_{11}$ is $k_{1}\times k_{1}$, $R_{2}$ is $k_{2}\times k_{2}$, $k_{1}+k_{2}=n$, and $E$ is a permutation matrix such that the diagonal entries in $R$ occur in non-increasing order. Our aim is to determine an index $k_{1}$ such that $R_{11}$ is well-conditioned and $R_{22}$ is negligible. If $k_{1}<n$, then the matrix $A$ is rank deficient and the dimension of its null space is $k_{2}$. We compute a basis for the right and left nullspace by
\begin{equation}
x_{j}=E\hat{x}_{j}~\text{  and  }~y_{j}=Qe_{j},
\end{equation}
where \[R(1:k_{1},1:k_{1})\hat{x}_{j}(1:k_{1})=-R(1:k_{1},j),\] $\hat{x}_{j}(k_{1}+1:j-1)=0,~\hat{x}_{j}(j)=1$,~$\hat{x}_{j}(j+1:n)=0$, and $e_{j}$ is the $jth$ standard basis vector for $j=k_{1}+1,\ldots,n$.

The above process is performed on both matrices $A_{0}$ and $A_{d}$, thereby computing the geometric multiplicity of the zero and infinite eigenvalues, respectively, and their corresponding right and left eigenvectors. Once this is done, the columns of the matrix $Q$ are then used to compute initial estimates to the remaining finite eigenvalues via the roots of the quadratic form $q_{j}^{*}P(\lambda)q_{j}$ for $j=1,\ldots,n$. We compute the roots of each polynomial using Laguerre's method, specialized for the scalar polynomial, which was outlined previously. Note that the computation of the QR factorization along with solving the $n$ polynomial equations has a computational complexity of $O(n^{3}+nd^{2})$ and is therefore in accordance with the computational complexity of our method.

\subsection{Hessenberg and Tridiagonal Form}\label{sec:HessTri} 
We are motivated to consider the case where the coefficients of the matrix polynomial are in Hessenberg or tridiagonal form. The Hessenberg case is of both theoretical and practical importance. In light of the original development of Hyman's method, we will consider this method for upper Hessenberg matrix polynomials and note the tridiagonal matrix polynomial as a special case. What's more, every matrix polynomial can be reduced to Hessenberg form~\cite{Cameron2016}. While no numerically stable algorithm currently exists to perform this reduction, there exist applications where this Hessenberg structure arises naturally; for example, the Bilby problem in~\cite{Betcke2013}. With regards to the tridiagonal case, previous developments have focused on the linear and quadratic polynomial eigenvalue problem~\cite{Bini2005, Plestenjak2006}, whereas our development is applicable to any degree polynomial eigenvalue problem.

\subsubsection{Hyman's Method}
Hyman's method, a method for evaluating the characteristic polynomial and its derivatives at a point, is attributed to a conference presentation given by M.A. Hyman of the Naval Ordnance Laboratory in 1957~\cite{Wilkinson1963}. The backward stability of this method has been shown~\cite{Wilkinson1963}, and this method has been used to evaluate the characteristic polynomial of a matrix~\cite{Parlett1964} and matrix pencil~\cite{Gary1965}. Here we generalize these approaches in order to apply Hyman's method to the matrix polynomial.

We denote an upper Hessenberg matrix polynomial as follows \[P(\lambda)=\begin{bmatrix}p_{11}(\lambda) & p_{12}(\lambda) & \cdots & p_{1n}(\lambda)\\ p_{21}(\lambda) & p_{22}(\lambda) & \cdots & p_{2n}(\lambda)\\ & \ddots & \ddots & \vdots\\ & & p_{n,n-1}(\lambda) & p_{nn}(\lambda)\end{bmatrix},\] where $p_{ij}(\lambda)$ is a scalar polynomial of degree at most $d$. Note that in the tridiagonal case $p_{ij}(\lambda)=0$ for $j>i+1$. The insightful observation that Hyman made was that $P(\lambda)$ has the same determinant as \[\begin{bmatrix}p_{11}(\lambda) & p_{12}(\lambda) & \cdots & b(\lambda)\\ p_{21}(\lambda) & p_{22}(\lambda) & \cdots & 0\\ & \ddots & \ddots & \vdots\\ & & p_{n,n-1}(\lambda) & 0\end{bmatrix},\] provided that
\begin{equation}\label{eq:hym1}
P(\lambda)\begin{bmatrix}x_{1}(\lambda)\\\vdots\\x_{n-1}(\lambda)\\1\end{bmatrix}=\begin{bmatrix}b(\lambda)\\0\\\vdots\\0\end{bmatrix}.
\end{equation} If we let $p(\lambda)=\det P(\lambda)$, then \[p(\lambda)=(-1)^{n-1}b(\lambda)q(\lambda),\] where $q(\lambda)=\prod_{j=1}^{n-1}p_{j+1,j}(\lambda)$. Given a fixed scalar $\lambda$, all unknown values in~\eqref{eq:hym1} can be computed in $O(n^{2})$ time for Hessenberg $P(\lambda)$ and in $O(n)$ time for tridiagonal $P(\lambda)$. The values of $x_{1}(\lambda),\ldots,x_{n-1}(\lambda)$ are then used to solve the following equation
\begin{equation}\label{eq:hym2} 
P(\lambda)\begin{bmatrix}x_{1}^{'}(\lambda)\\\vdots\\ x_{n-1}^{'}(\lambda)\\ 0\end{bmatrix}=\begin{bmatrix}b^{'}(\lambda)\\ 0\\\vdots\\ 0\end{bmatrix}-P^{'}(\lambda)\begin{bmatrix}x_{1}(\lambda)\\\vdots\\x_{n-1}(\lambda)\\1\end{bmatrix}.
\end{equation}
Then the values of $x_{1}(\lambda),\ldots,x_{n-1}(\lambda)$ and their derivatives are used to compute $b^{''}(\lambda)$
\begin{equation}\label{eq:hym3}
P(\lambda)\begin{bmatrix}x_{1}^{''}(\lambda)\\\vdots\\ x_{n-1}^{''}(\lambda)\\ 0\end{bmatrix}=\begin{bmatrix}b^{''}(\lambda)\\ 0\\\vdots\\ 0\end{bmatrix}-2P^{'}(\lambda)\begin{bmatrix}x_{1}^{'}(\lambda)\\\vdots\\x_{n-1}^{'}(\lambda)\\0\end{bmatrix}-P^{''}(\lambda)\begin{bmatrix}x_{1}(\lambda)\\\vdots\\x_{n-1}(\lambda)\\1\end{bmatrix}.
\end{equation}

Once $b(\lambda)$, $b^{'}(\lambda)$, and $b^{''}(\lambda)$ have been computed, an efficient computation of the Laguerre correction term can be obtained from the following
\begin{equation}\label{eq:hessclit}
\begin{split}
\frac{p^{'}(\lambda)}{p(\lambda)}&=\frac{b^{'}(\lambda)+b(\lambda)\left(\frac{q^{'}(\lambda)}{q(\lambda)}\right)}{b(\lambda)},\\
-\left(\frac{p^{'}(\lambda)}{p(\lambda)}\right)^{'}&=\left(\frac{p^{'}(\lambda)}{p(\lambda)}\right)^{2}-\left(\frac{b^{''}(\lambda)+2b^{'}(\lambda)\left(\frac{q^{'}(\lambda)}{q(\lambda)}\right)+b(\lambda)\left(\frac{q^{''}(\lambda)}{q(\lambda)}\right)}{b(\lambda)}\right).
\end{split}
\end{equation}
Note that we have carefully avoided the potentially hazardous product in computing $q(\lambda)$ and its derivatives by replacing it with 
$$
\frac{q^{'}(\lambda)}{q(\lambda)}=\underset{j=1}{\overset{n-1}{\sum}}\frac{p_{j+1,j}^{'}(\lambda)}{p_{j+1,j}(\lambda)},
$$
and
$$
\frac{q^{''}(\lambda)}{q(\lambda)}=\left(\frac{q^{'}(\lambda)}{q(\lambda)}\right)^{'}+\left(\frac{q^{'}(\lambda)}{q(\lambda)}\right)^{2},
$$
where $\left(\frac{q^{'}(\lambda)}{q(\lambda)}\right)^{'}=\underset{j=1}{\overset{n-1}{\sum}}\left(\frac{p_{j+1,j}^{''}(\lambda)}{p_{j+1,j}(\lambda)}-\left(\frac{p_{j+1,j}^{'}(\lambda)}{p_{j+1,j}(\lambda)}\right)^{2}\right)$.

Several remarks are in order. First, if any subdiagonal of $P(\lambda)$ is zero, then solving~\eqref{eq:hym1}-\eqref{eq:hym3} will require division by zero. Fortunately, we can replace any zero subdiagonal with double precision unit roundoff $\epsilon$ and maintain the backward stability of Hyman's method~\cite{Wilkinson1963}. Second Hyman's method significantly reduces the cost of each iteration and the resulting cost of our method is $O(dn^{3}+d^{2}n^{3})$ for Hessenberg matrix polynomials and $O(d^{2}n^{2})$ for tridiagonal matrix polynomials.

\subsubsection{Eigenvectors, Stopping Criteria, and Condition Numbers}\label{sec:HessESC}
Let $\lambda\in\mathbb{C}$ be an approximate eigenvalue and define
\begin{equation}\label{eq:hessqrf}
QR=P(\lambda).
\end{equation}
This factorization can be done in $O(n^{2})$ time for Hessenberg $P(\lambda)$ and in $O(n)$ time for tridiagonal $P(\lambda)$. Let $j$ denote the index that minimizes $|r_{jj}|$. If $|r_{jj}|<\tau$, where $\tau$ is some predetermined tolerance, then we say that the approximate eigenvalue has converged. This constitutes our first stopping criterion. Given that $\lambda$ has converged, we compute the corresponding right and left eigenvectors using
\begin{equation}\label{eq:hessev1}
x=\hat{x}~\text{ and }~y=Q\hat{y},
\end{equation}
where 
\begin{gather*}
R(1:j-1,1:j-1)\hat{x}(1:j-1)=-R(1:j-1,j),\\
R(j+1:n,j+1:n)\hat{y}(j+1:n)=-R(j+1:n,j),
\end{gather*}
$\hat{x}(j)=1$,~$\hat{x}(j+1:n)=0$,~$\hat{y}(j)=1$, and $\hat{y}(1:j-1)=0$.

If there exists no index $j$ such that $|r_{jj}|<\tau$, then we compute an upper bound for the backward error of the eigenvalue approximation via~\eqref{eq:berrUB}. If the backward error of $\lambda$ is less than $\epsilon$, then we say that $\lambda$ has converged. This constitutes our second stopping criterion. We then apply inverse iteration to 
\[
R^{*}R~\text{ and }~QRR^{*}Q^{*},
\]
to compute the right and left singular vectors, respectively. Using~\eqref{eq:hessev1} to form initial estimates for the inverse iteration results in quick convergence to excellent eigenvector approximations. 

As was done in \S\ref{sec:ESC}, we also check if the approximate eigenvalue $\lambda$ satisfies~\eqref{eq:ssc}. In this case, no significant change to the current eigenvalue approximation is made, and we say that $\lambda$ has converged. This constitutes our third stopping criterion. 

In summary, given an approximate eigenvalue, we compute the QR factorization in~\eqref{eq:hessqrf}. If any of the three stopping criteria are met, or the maximum number of iterations allowed is reached, then we cease to update the eigenvalue approximation and compute corresponding right and left eigenvectors. Otherwise, we use Hyman's method to compute the Laguerre iterate. Once the approximate eigenvalue has converged and the corresponding right and left eigenvectors are computed, we report each eigenvalue's condition number~\eqref{eq:con}.

\subsubsection{Initial Estimates}
Just as was done with the general matrix polynomial, initial estimates consist of computing the geometric multiplicity of the zero and infinite eigenvalues, a basis for the corresponding eigenspace, and initial estimates to the remaining finite eigenvalues via the numerical range. For the Hessenberg case, there is no difference whatsoever, since we can accomplish all of the above while adhering to the cost of the method. However, for the tridiagonal case we must make several changes in order to align with the method's cost. 

When computing the geometric multiplicity of the zero and infinite eigenvalues, we must settle for only a QR factorization of the coefficient matrices $A_{0}$ and $A_{d}$, since the column pivoting has the potential to destroy the tridiagonal structure and make this method too expensive. Therefore, we cannot expect that the diagonal entries of the upper triangular $R$ appear in descending order. It is for this reason that we identify the pivots of $R$ one row at a time. By keeping track of the location of the previous pivot and utilizing the structure of $R$, we can identify whether or not each row has a pivot, and the location of said pivot, in $O(n^{2})$ time. Then, the dimension of the corresponding eigenspace is $(n-k)$, where $k$ is the number of rows with a pivot. If $(n-k)>1$, then we use the location of each non-pivot column to compute a basis for eigenspace in $O(n^{2})$ time. 

The QR factorization of the tridiagonal matrices $A_{0}$ and $A_{d}$ is computed using plane rotations and therefore each column vector of $Q$ can be computed in $O(n)$ time. Furthermore, each quadratic form $x^{*}P(\lambda)x$ can be computed in $O(dn)$ time and the roots of each scalar polynomial can be computed in $O(d^{2})$ time. It follows that the initial estimates of the tridiagonal matrix polynomial can be found in $O(d^{2}n^{2})$ time. 

\section{Stability}\label{sec:stability}
The stability of any numerical method is of the utmost importance. In this section, we provide a detailed account of why our method is robust against the potentially harmful overflow in the evaluation of the matrix polynomial and its derivatives. Furthermore, we identify the predetermined tolerance used in the stopping criteria (\S\ref{sec:ESC}) and show that if either the first or second stopping criterion holds then we can guarantee the backward stability of our eigenvalue approximation.  

\subsection{Robustness against Overflow}\label{sec:rao}
Both the computation of the Laguerre iterate as well as the corresponding eigenvector approximation is driven by a QR factorization of $P(\lambda)$, with column pivoting for the general matrix polynomial, where $\lambda$ is the current eigenvalue approximation. The evaluation of $P(\lambda)$ can be done efficiently using Horner's method, but for large degree matrix polynomials this computation is prone to overflow. It is for this reason that when $|\lambda|>1$ we opt to work with the reversal polynomial~\eqref{eq:revpoly}, with $\rho=1/\lambda$. One may argue that now our computation is prone to underflow, but this is not harmful; as $\lambda\rightarrow\infty$, $\rP(\rho)\rightarrow A_{d}$, which is aligned with our definition of the infinite eigenvalues of $P(\lambda)$ being the zero eigenvalues of $\rP(\rho)$. 

Now, the general Laguerre correction term in~\eqref{eq:clit} becomes:
\begin{equation}\label{eq:crlit}
\begin{split}
\frac{p^{'}(\lambda)}{p(\lambda)}&=\rho\cdot\text{trace}\left(dI-\rho X_{3}(\rho)\right),\\
-\left(\frac{p^{'}(\lambda)}{p(\lambda)}\right)^{'}&=\rho^{2}\cdot\text{trace}\left(dI-2\rho X_{3}(\rho)+\rho^{2}\left(X_{3}^{2}(\rho)-X_{4}(\rho)\right)\right),
\end{split}
\end{equation}
where $\rP(\rho)X_{3}(\rho)=\rP^{'}(\rho)$, and $\rP(\rho)X_{4}(\rho)=\rP^{''}(\rho)$. By using~\eqref{eq:clit} when $|\lambda|\leq 1$ and~\eqref{eq:crlit} when $|\lambda|>1$, we have a method for computing the Laguerre iterate of a matrix polynomial which is robust against overflow. This is similar to the approach in~\cite{Bini1996} for evaluating polynomials, but to our knowledge, we are the first to apply this to matrix polynomials. 

For Hessenberg matrix polynomials (tridiagonal case included), we apply Hyman's method to the reversal polynomial in order to obtain the values of $r^{'}(\rho)/r(\rho)$ and $r^{''}(\rho)/r(\rho)$, where $r(\rho)=\det\rP(\rho)$. The Laguerre correction term in~\eqref{eq:hessclit} then becomes:
\begin{equation}\label{eq:hesscrlit}
\begin{split}
\frac{p^{'}(\lambda)}{p(\lambda)}&=\rho\left(nd-\rho\frac{r^{'}(\rho)}{r(\rho)}\right),\\
-\left(\frac{p^{'}(\lambda)}{p(\lambda)}\right)^{'}&=\rho^{2}\left(nd-2\rho\frac{r^{'}(\rho)}{r(\rho)}+\rho^{2}\left(\left(\frac{r^{'}(\rho)}{r(\rho)}\right)^{2}-\frac{r^{''}(\rho)}{r(\rho)}\right)\right).
\end{split}
\end{equation}
By using~\eqref{eq:hessclit} when $|\lambda|\leq 1$ and~\eqref{eq:hesscrlit} when $|\lambda|>1$, we have a method for computing the Laguerre iteration of an upper Hessenberg matrix polynomial (tridiagonal case included) which is both efficient and robust against overflow. 

For nonzero eigenvalues $\ker P(\lambda)=\ker\rP(\rho)$, where $\rho=1/\lambda$. Therefore, if $|\lambda|>1$ then we switch $P(\lambda)$ with $\rP(\rho)$ in both~\eqref{eq:qrf}, for the general matrix polynomial, and~\eqref{eq:hessqrf}, for the Hessenberg matrix polynomial (which includes the tridiagonal case). The discussion on computing corresponding right and left eigenvectors in \S\ref{sec:ESC}, for the general matrix polynomial, and \S\ref{sec:HessESC}, for the Hessenberg matrix polynomial, carries over naturally. With the exception that the upper bound on the backward error in the eigenvalue approximation from~\eqref{eq:berrUB} becomes:
\begin{equation}
\frac{\left\Vert b \right\Vert_{2}}{\ra\left\Vert \rP(\rho)^{-1}b \right\Vert_{2}},
\end{equation}
where $\ra=\sum_{i=0}^{d}|\rho|^{d-i}\left\Vert A_{i} \right\Vert_{2}$, and $b\in\mathbb{C}^{n}$ is nonzero. In addition, the normwise condition number from~\eqref{eq:con} becomes:
\begin{equation}
\kappa(\lambda,P)=\frac{\ra\left\Vert x \right\Vert_{2}\left\Vert y \right\Vert_{2}}{|y^{*}(d\cdot\rP(\rho)-\rho\rP^{'}(\rho))x|}.
\end{equation}

\subsection{Backward Stability}\label{sec:bs}
Let $\lambda\in\mathbb{C}$ be an approximate eigenvalue, $x$ a corresponding right eigenvector, and $y$ a corresponding left eigenvector. Following the development in~\cite{Tisseur2000}, we define the normwise backward error of the right eigenpair by
\begin{equation}\label{eq:berrdef}
\eta(\lambda,x)=\min\{\epsilon\colon~[P(\lambda)+\Delta P(\lambda)]x=0,\,\left\Vert\Delta A_{i}\right\Vert_{2}\leq\epsilon\left\Vert A_{i} \right\Vert_{2},\,i=0,1,\ldots,d\}
\end{equation}
where $\Delta P(\lambda)=\sum_{i=0}^{d}\lambda^{i}\Delta A_{i}$. This definition of the normwise backward error is concerned with a relative measurement of perturbation in the coefficients of the matrix polynomial. The normwise backward error for the left eigenpair $(\lambda, y)$ is similarly defined.

The first stopping criterion outlined in Sections~\ref{sec:ESC} and~\ref{sec:HessESC} is concerned with the smallest diagonal entry of $R$ being less than $\tau$. We define $\tau=\alpha\epsilon$ if $|\lambda|\leq 1$ and $\tau=\ra\epsilon$ otherwise, where $\alpha=\sum_{i=0}^{d}|\lambda|^{i}\left\Vert A_{i} \right\Vert_{2}$, $\ra=\sum_{i=0}^{d}|\rho|^{d-i}\left\Vert A_{i} \right\Vert_{2}$, $\rho=1/\lambda$, and $\epsilon$ is double precision unit roundoff. 
\begin{theorem}\label{thm:berr1}
If the first stopping criterion holds, then the approximate right eigenpair has a backward error bounded above by $\epsilon(2n+1)+O(\epsilon^{2})$.
\end{theorem}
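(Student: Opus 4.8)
The plan is to reduce the whole statement to Tisseur's closed-form expression for the normwise backward error. Solving the minimization in~\eqref{eq:berrdef} explicitly, one obtains (following~\cite{Tisseur2000}) that for any approximate right eigenpair $(\lambda,x)$ with $|\lambda|\le 1$,
\[
\eta(\lambda,x)=\frac{\|P(\lambda)x\|_{2}}{\alpha\|x\|_{2}},
\]
where $\alpha=\sum_{i=0}^{d}|\lambda|^{i}\|A_{i}\|_{2}$ is exactly the quantity defining the tolerance $\tau=\alpha\epsilon$. It therefore suffices to bound the scaled residual $\|P(\lambda)x\|_{2}/(\alpha\|x\|_{2})$, and the $|\lambda|>1$ case will follow verbatim after replacing $P(\lambda)$, $\alpha$, $\tau$ by $\rP(\rho)$, $\ra$, $\ra\epsilon$, since $\ker P(\lambda)=\ker\rP(\rho)$ at any nonzero eigenvalue.

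First I would isolate where the bound comes from by examining the idealized exact-arithmetic computation. Writing $x=E\hat{x}$ and using $P(\lambda)E=QR$ gives $P(\lambda)x=QR\hat{x}$. Partitioning $R$ and substituting the defining relations $R(1:n-1,1:n-1)\hat{x}(1:n-1)=-R(1:n-1,n)$ and $\hat{x}(n)=1$, the leading $n-1$ entries of $R\hat{x}$ cancel, leaving $R\hat{x}=r_{nn}e_{n}$. Since $Q$ is unitary this yields $\|P(\lambda)x\|_{2}=|r_{nn}|$, while $\|x\|_{2}=\|\hat{x}\|_{2}\ge|\hat{x}(n)|=1$ because $E$ is a permutation. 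With the stopping criterion $|r_{nn}|<\tau=\alpha\epsilon$, the backward-error formula already gives $\eta(\lambda,x)<\epsilon$ in exact arithmetic; the factor $2n$ in the statement must therefore originate entirely in the rounding errors of the factorization and the triangular solve.

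The substance of the proof is then to propagate those two error sources. For the factorization I would invoke the backward stability of Householder QR: the computed factors satisfy $\tilde{Q}\tilde{R}=P(\lambda)E+F$ with $\tilde{Q}$ essentially unitary and $\|F\|_{2}\le c_{1}n\epsilon\|P(\lambda)\|_{2}\le c_{1}n\epsilon\,\alpha$, using $\|P(\lambda)\|_{2}\le\alpha$. For the triangular solve I would use its backward stability: the computed $\hat{x}(1:n-1)$ satisfies a perturbed system whose perturbation $\Delta R_{11}$ obeys $\|\Delta R_{11}\|_{2}\le c_{2}n\epsilon\|\tilde{R}_{11}\|_{2}$, so the residual of that system has norm at most $c_{2}n\epsilon\|\tilde{R}_{11}\|_{2}\|\hat{x}\|_{2}$; since $\tilde{R}_{11}$ is a leading principal submatrix of $\tilde{R}$ we have $\|\tilde{R}_{11}\|_{2}\le\|\tilde{R}\|_{2}=\alpha(1+O(n\epsilon))$. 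Reassembling $P(\lambda)x=\tilde{Q}\tilde{R}\hat{x}-F\hat{x}$, bounding $\|\tilde{R}\hat{x}\|_{2}$ by the triangular-solve residual plus $|\tilde{r}_{nn}|<\alpha\epsilon$, and dividing by $\alpha\|\hat{x}\|_{2}$ with $\|\hat{x}\|_{2}\ge 1$, produces a bound of the shape $\epsilon+(c_{1}+c_{2})n\epsilon+O(\epsilon^{2})$.

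The hard part will be pinning the constant down to exactly $2n+1$, that is, showing $c_{1}+c_{2}=2$ so that each error source contributes precisely $n\epsilon$ to first order. This forces me to work with the sharp componentwise rounding bounds for Householder QR and for back substitution rather than crude normwise constants, and to collect carefully every second-order cross term --- in particular the $1+O(\epsilon)$ departure of $\tilde{Q}$ from unitarity and the $O(n\epsilon)$ inflation of $\|\tilde{R}\|_{2}$ above $\alpha$ --- into the stated $O(\epsilon^{2})$ remainder.
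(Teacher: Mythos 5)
Your first two paragraphs reproduce the paper's proof almost exactly: the reduction to Tisseur's residual formula for $\eta(\lambda,x)$, the exact-arithmetic identity $R\hat{x}=r_{nn}e_{n}$ giving a residual below $\tau=\alpha\epsilon$, and the observation $\Vert x\Vert_{2}=\Vert\hat{x}\Vert_{2}\geq 1$ are all present there (the paper's vector $b$ with $\Vert b\Vert_{2}<\tau$ is your $r_{nn}e_{n}$). The divergence --- and the genuine gap --- is in how the factor $2n$ is produced. The paper does not charge the QR factorization at all: it treats $QR=P(\lambda)E$ as exact and attributes the entire first-order error to the triangular solve, invoking Corollary~2.7.9 of \cite{Watkins2010}, which says the computed solution satisfies $(R+\delta R)\hat{x}=b$ with $\Vert\delta R\Vert_{F}\leq 2n\epsilon\Vert R\Vert_{F}+O(\epsilon^{2})$; combined with $\Vert Rx\Vert_{2}\leq\tau+\Vert\delta R\Vert_{F}\Vert x\Vert_{2}$ and $\Vert R\Vert_{F}\leq\alpha$ (using the paper's Frobenius-norm convention for $\alpha$), the bound $\epsilon(2n+1)+O(\epsilon^{2})$ follows in one step. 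Your plan instead budgets $c_{1}n\epsilon$ for Householder QR and $c_{2}n\epsilon$ for the solve and defers to ``the hard part'' the claim $c_{1}+c_{2}=2$. That step is not merely hard, it is unattainable: the sharpest standard backward-error bounds for Householder QR of an $n\times n$ matrix have the form $\Vert F\Vert_{F}\leq c\,n^{2}\epsilon\Vert A\Vert_{F}$ --- quadratic, not linear, in $n$ --- and column pivoting does not improve this. Once you charge the factorization honestly, its contribution alone exceeds the theorem's entire $2n\epsilon$ budget, and the best bound your route can deliver is $O(n^{2}\epsilon)$, which is not the stated result.

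The repair is to adopt the paper's accounting: measure the residual through the computed triangular factor (equivalently, suppress the factorization error, which the paper's proof does silently in writing $\Vert P(\lambda)x\Vert_{2}$ as $\Vert Rx\Vert_{2}$), so that Watkins's corollary supplies the full constant $2n$ from the back substitution alone, and the stopping criterion $\Vert b\Vert_{2}<\tau$ supplies the remaining $\epsilon$. It is fair to note that your version is in a sense more scrupulous --- the identity $\Vert P(\lambda)x\Vert_{2}=\Vert Rx\Vert_{2}$ does presuppose an exactly unitary $Q$ and an exact factorization $QR=P(\lambda)E$ --- but the theorem with the specific constant $2n+1$ is only provable under that presupposition, so your attempt to make the factorization error contribute exactly $n\epsilon$ would fail at the final step.
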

\begin{proof}
From definition~\eqref{eq:berrdef} and~\cite{Tisseur2000}[Theorem 1] it follows that we may compute the normwise backward error for the right eigenpair $(\lambda,x)$ by
\begin{equation}\label{eq:berr1}
\begin{cases}
\eta(\lambda,x)=\frac{\left\Vert P(\lambda)x \right\Vert_{2}}{\alpha\left\Vert x \right\Vert_{2}} & \text{if $|\lambda|\leq 1$,} \\
\eta(\lambda,x)=\frac{\left\Vert \rP(\rho)x \right\Vert_{2}}{\ra\left\Vert x \right\Vert_{2}} & \text{otherwise}.
\end{cases}
\end{equation}
Without loss of generality we assume that $|\lambda|\leq 1$ for the remainder of the proof. Denote by $QR$ the QR factorization, with column pivoting for general matrix polynomials, of $P(\lambda)$. Denote by $x$ the corresponding eigenvector, for the general matrix polynomial see~\eqref{eq:ev1} and for the Hessenberg matrix polynomial (including tridiagonal case) see~\eqref{eq:hessev1}. Then the computed right eigenvector satisfies
\[
(R+\delta R)x=b,
\]
where $\left\Vert b\right\Vert_{2}<\tau$. It follows from~\cite{Watkins2010}[Corollary 2.7.9] that $\left\Vert\delta R\right\Vert_{F}\leq 2n\epsilon\left\Vert R\right\Vert_{F}+O(\epsilon^{2})$, where $F$ denotes the Frobenius norm. Therefore, 
\[
\left\Vert Rx\right\Vert_{2}\leq\tau + \left\Vert\delta R\right\Vert_{F}\left\Vert x\right\Vert_{2}. 
\]
Recall, in practice, that we replace the matrix 2-norm in the definition of $\alpha$ with the Frobenius norm and note that $\left\Vert R\right\Vert_{F}\leq\alpha$. Thus, the result follows from dividing both sides of the above equation by $\alpha\left\Vert x\right\Vert_{2}$ to give
\[
\frac{\left\Vert Rx\right\Vert_{2}}{\alpha\left\Vert x\right\Vert_{2}}\leq\epsilon(1 + 2n)+O(\epsilon^{2}).
\]
\end{proof}

If an approximate eigenvector has not been computed, then an appropriate measure of the backward error is given by
\begin{equation}\label{eq:berr2}
\begin{cases}
\eta(\lambda)=\frac{1}{\alpha\left\Vert P(\lambda)^{-1}\right\Vert_{2}} & \text{if $|\lambda|\leq 1$,} \\
\eta(\lambda)=\frac{1}{\ra\left\Vert rP(\rho)^{-1}\right\Vert_{2}} & \text{otherwise}.
\end{cases}
\end{equation}
Again, without loss of generality, we assume that $|\lambda|\leq 1$.

If $\lambda$ is an approximate eigenvalue for which the second stopping criterion holds, then there exists a nonzero vector $b\in\mathbb{C}^{n}$ such that 
\[
\frac{\left\Vert b\right\Vert_{2}}{\alpha\left\Vert P(\lambda)^{-1}\right\Vert_{2}}<\epsilon,
\]
it follows that the backward error in the approximate eigenvalue~\eqref{eq:berr2} is bounded above by $\epsilon$. The corresponding right eigenvector is computed as an approximate right singular vector of $P(\lambda)$ corresponding to the smallest singular value $\lambda$ and therefore minimizes the backward error in the right eigenpair~\eqref{eq:berr1}.

Note that the results in the section hold naturally for left eigenpairs. Additionally, the result in Theorem~\ref{thm:berr1} is a worst case scenario and typically you can ignore the factor of $n$. Finally, even though we can only guarantee the backward stability of our eigenvalue approximation if the first or second stopping criterion hold, in practice it is highly unlikely to experience anything but backward stability.  

\section{Numerical Experiments}\label{sec:NE}
We have implemented the algorithm for solving the polynomial eigenvalue problem via Laguerre's method in the software package LMPEP. This package contains our implementation in FORTRAN 90 and can be freely downloaded from Github by visiting \url{https://github.com/Nick314159/LMPEP}. 

In this section, we provide numerical experiments to verify the computational complexity, stability, and accuracy of our methods. All tests were performed on a computer running CENTOS 7 with an Intel Core i5 processor, where the code was compiled with the GNU Fortran (GCC) 4.8.5 20150623 (Red Hat 4.8.5-11) compiler.

\subsection{Complexity}
We first verify the asymptotic complexity of the method. In \S\ref{sec:LMPEP} it was shown that the computational complexity of the method for general matrix polynomials is $O(dn^{4}+d^{2}n^{3})$, and therefore for scalar polynomials, the expected computational complexity is $O(d^{2})$. In addition, in \S\ref{sec:HessTri} it was shown that the computational complexity of the method for Hessenberg matrix polynomials is $O(dn^{3}+d^{2}n^{3})$ and for tridiagonal matrix polynomials is $O(dn^{2}+d^{2}n^{2})$. Four tests were executed:
\begin{itemize}
\item	For the general matrix polynomial, we verify the quadratic complexity in $d$ by fixing $n=2$ and computing the eigenvalues of random matrix polynomials of degree $d=50, 100, \ldots, 1600$. We also verify the quartic complexity in $n$ by fixing $d=2$ and computing the eigenvalues of random matrix polynomials of size $n=20, 40, \ldots, 320$.
\item	For the scalar polynomial, we verify the quadratic complexity by computing the roots of random polynomials of degree $d=50, 100, \ldots, 6400$. We compare the timings with the POLZEROS from~\cite{Bini1996} and AMVW from~\cite{Aurentz2015}. 
\item	For the Hessenberg matrix polynomial, we verify the quadratic complexity in $d$ by fixing $n=2$ and computing the eigenvalues of random Hessenberg matrix polynomials of degree $d=50, 100, \ldots, 1600$. We also verify the cubic complexity in $n$ by fixing $d=2$ and computing eigenvalues of random Hessenberg matrix polynomials of size $n=20, 40, \ldots, 320$.
\item	For the tridiagonal matrix polynomial, we verify the quadratic complexity in $d$ by fixing $n=2$ and computing the eigenvalues of random tridiagonal matrix polynomials of degree $d=50, 100, \ldots, 1600$. We also verify the quadratic complexity in $n$ by fixing $d=2$ and computing eigenvalues of random tridiagonal matrix polynomials of size $n=20, 40, \ldots, 320$.
\end{itemize}

\begin{minipage}{\linewidth}
\centering
\begin{minipage}{0.5\textwidth}
\centering
\includegraphics[width=\textwidth]{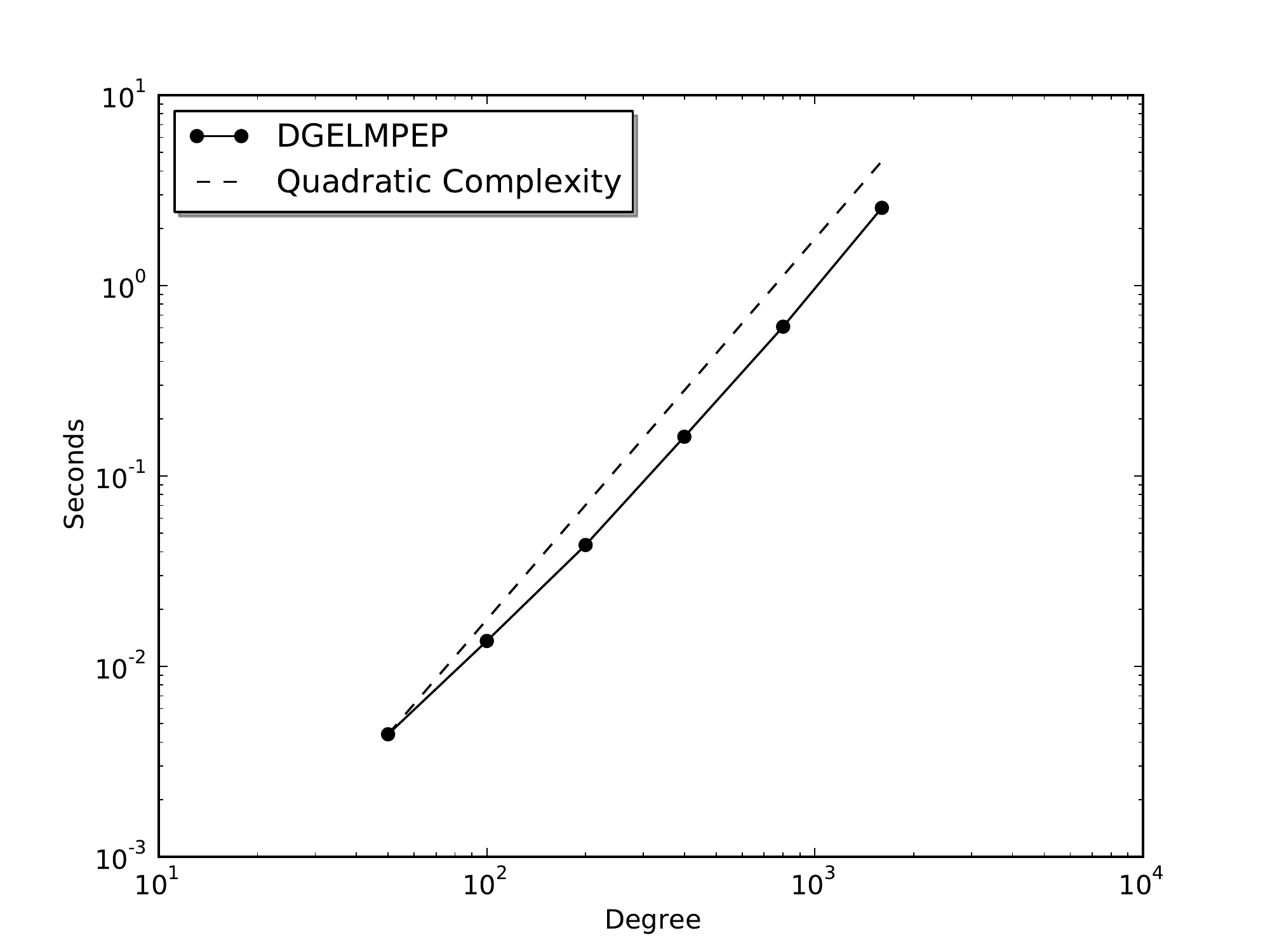}
\end{minipage}\hfill
\begin{minipage}{0.5\textwidth}
\centering
\includegraphics[width=\textwidth]{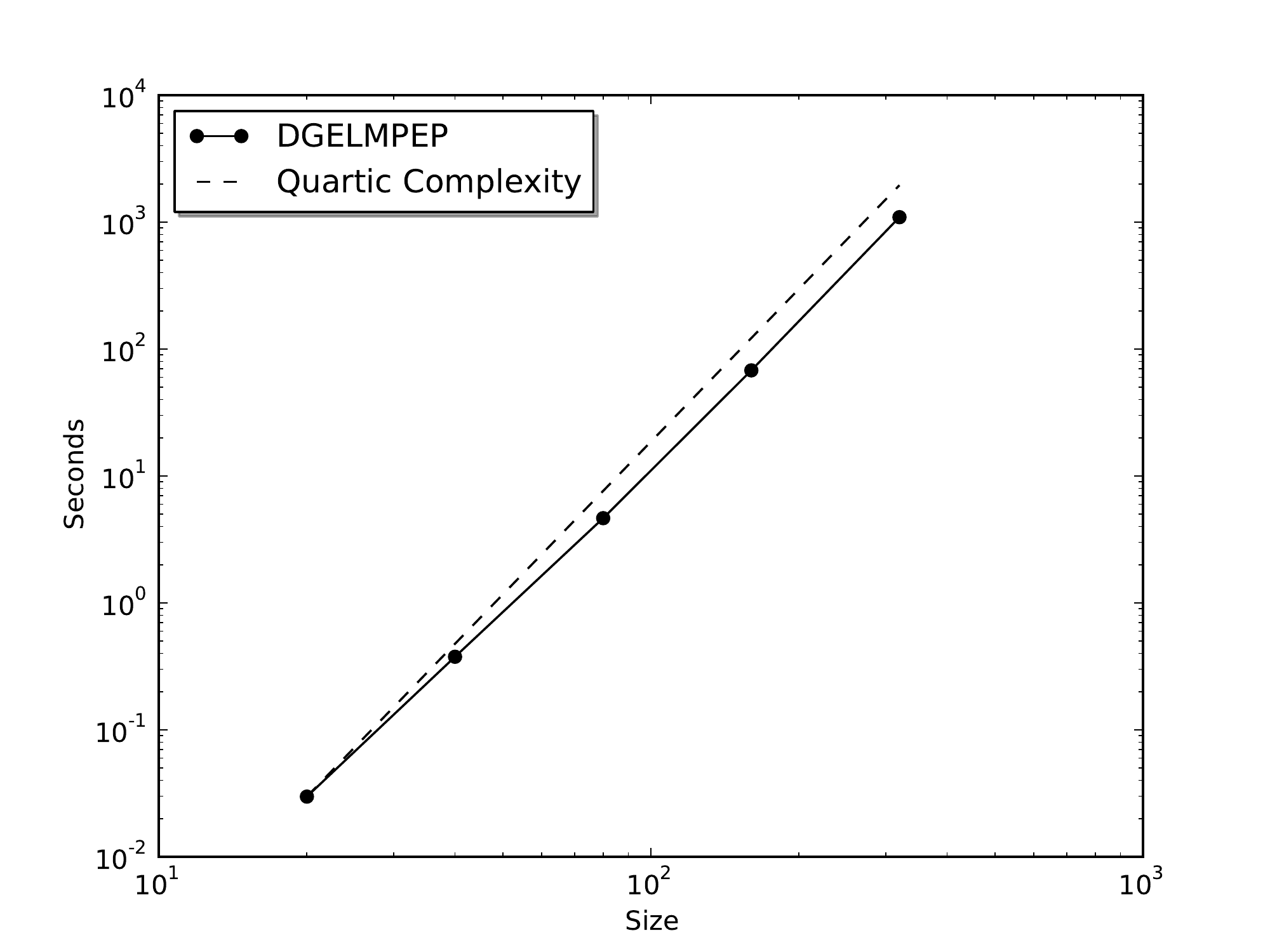}
\end{minipage}
\captionof{figure}{Test of the quadratic complexity of degree $d$ and quartic complexity of size $n$ of the general matrix polynomial. The tests are averaged over $5$ runs.}
\end{minipage}

\begin{minipage}{\linewidth}
\centering
\includegraphics[width=0.6\textwidth]{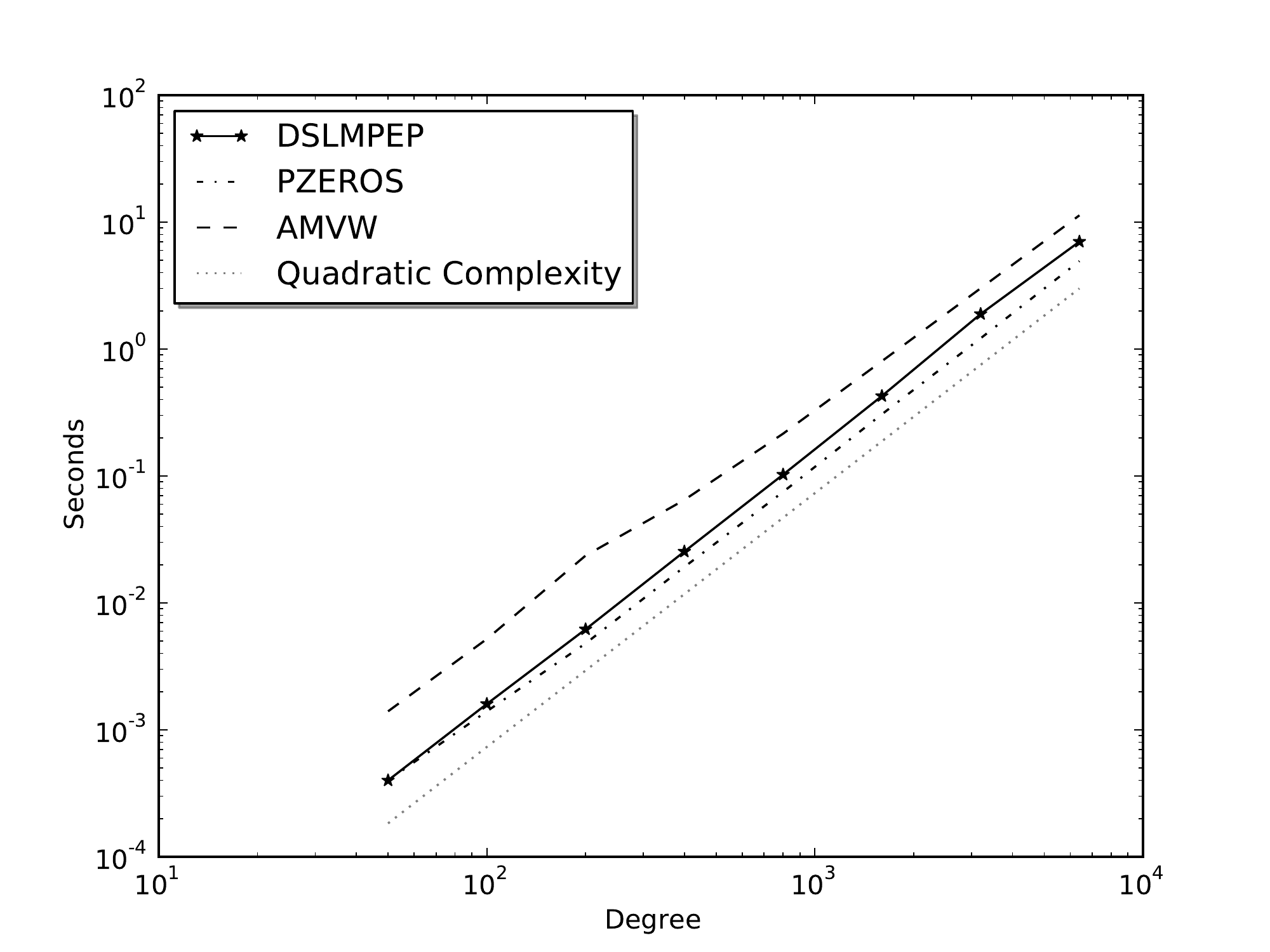}
\captionof{figure}{Test of the quadratic complexity of degree $d$ of the scalar polynomial. The tests are averaged over $5$ runs and runtimes are reported for POLZEROS and AMVW.}
\end{minipage}

\begin{minipage}{\linewidth}
\centering
\begin{minipage}{0.5\textwidth}
\centering
\includegraphics[width=\textwidth]{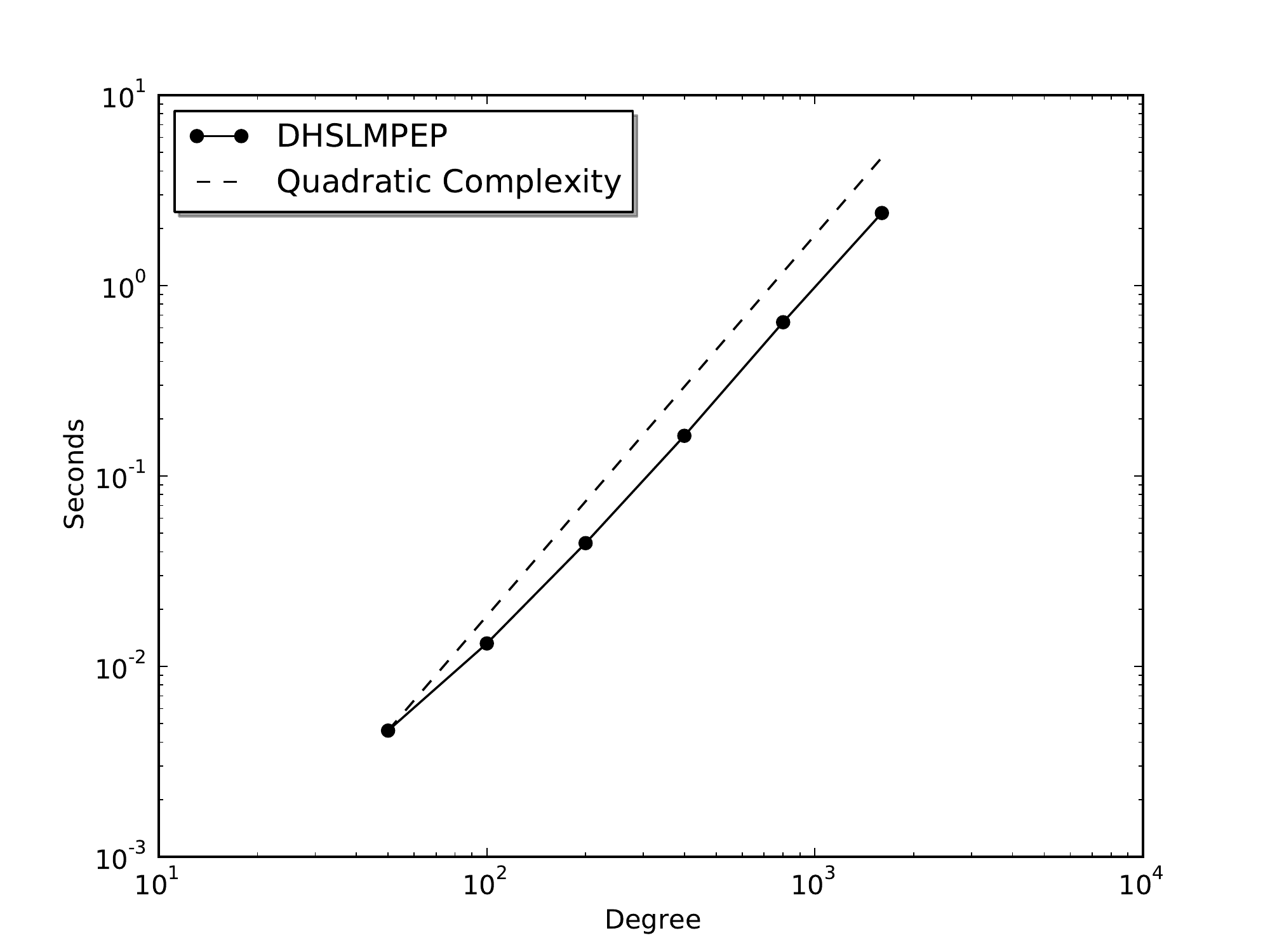}
\end{minipage}\hfill
\begin{minipage}{0.5\textwidth}
\centering
\includegraphics[width=\textwidth]{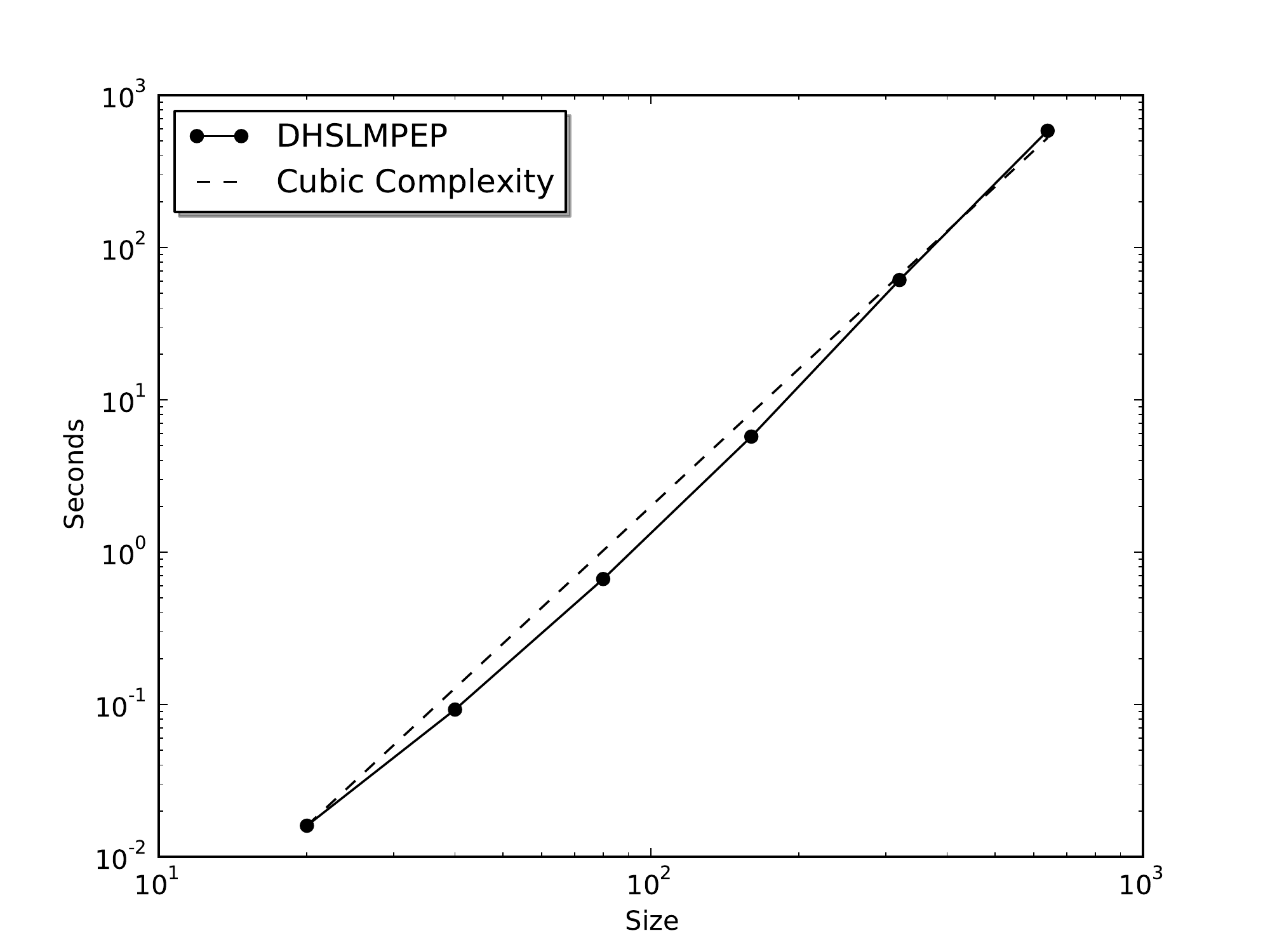}
\end{minipage}
\captionof{figure}{Test of the quadratic complexity of degree $d$ and cubic complexity of size $n$ of the Hessenberg matrix polynomial. The tests are averaged over $5$ runs.}
\end{minipage}

\begin{minipage}{\linewidth}
\centering
\begin{minipage}{0.5\textwidth}
\centering
\includegraphics[width=\textwidth]{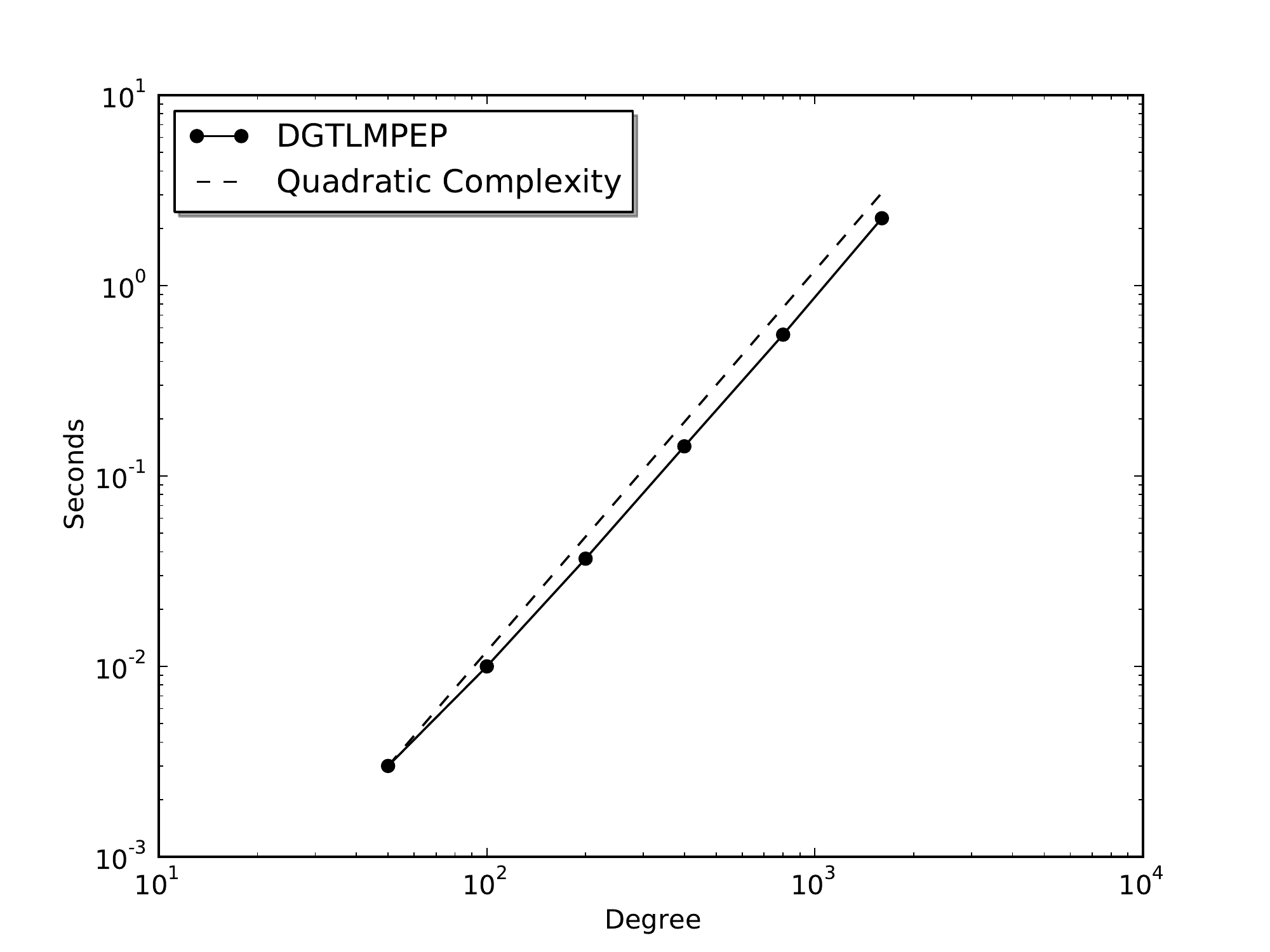}
\end{minipage}\hfill
\begin{minipage}{0.5\textwidth}
\centering
\includegraphics[width=\textwidth]{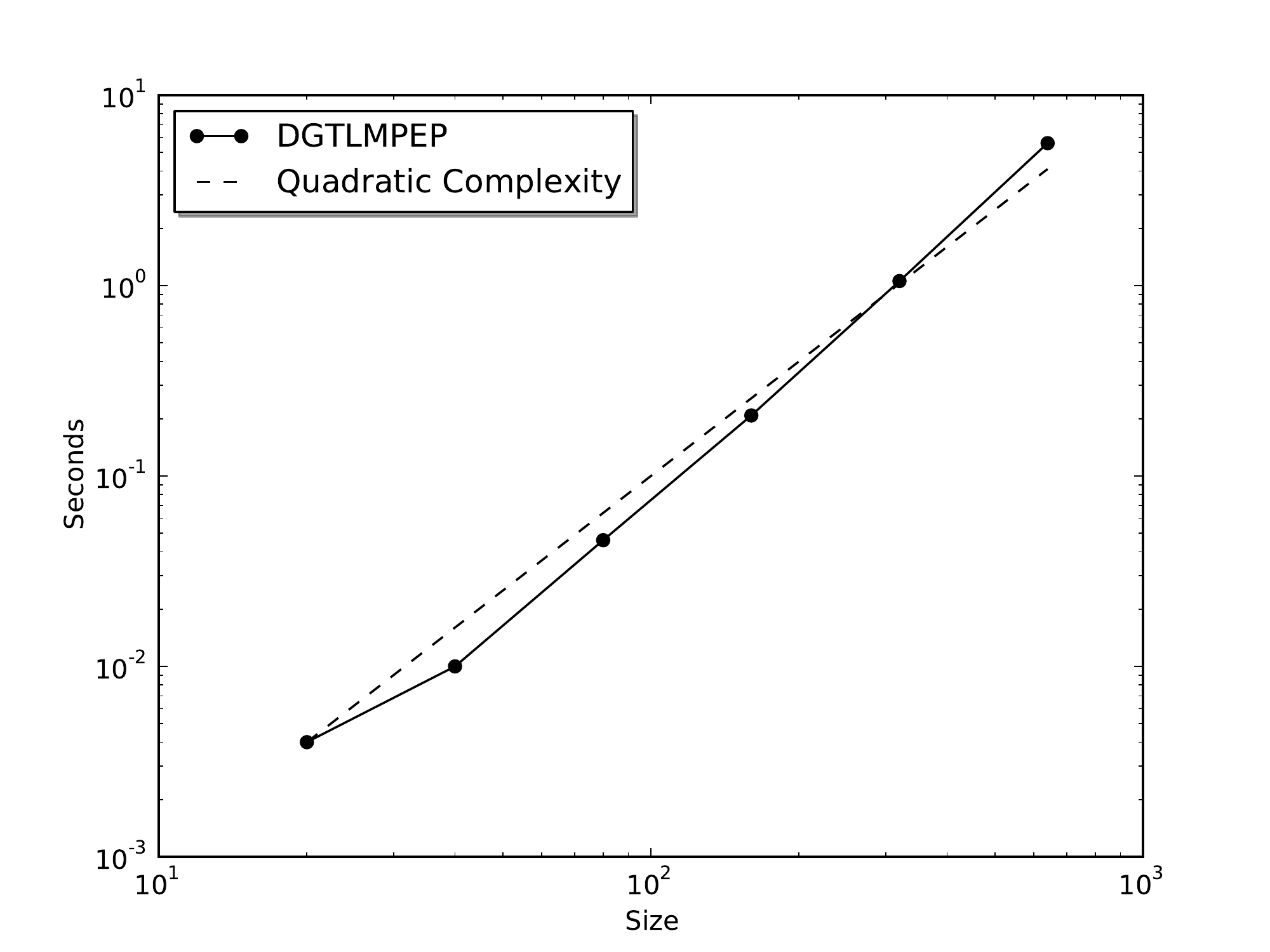}
\end{minipage}
\captionof{figure}{Test of the quadratic complexity of degree $d$ and quadratic complexity of size $n$ of the tridiagonal matrix polynomial. The tests are averaged over $5$ runs.}
\end{minipage}

\subsection{Stability and Accuracy}
We then verify the stability of our method. In \S\ref{sec:stability} it was shown that our method is robust against overflow and that if either the first or second stopping criterion are met then the approximate eigenpair has a tiny backward error. This along with a well-conditioned problem implies that our method is highly accurate. Four tests were executed: 
\begin{itemize}
\item   For the scalar polynomial, we verify the accuracy of our method by computing the roots of random polynomials of degree $d=50,100,\ldots,6400$. We compare the forward error with POLZEROS from~\cite{Bini1996} and AMVW from~\cite{Aurentz2015}.
\item   For the tridiagonal matrix polynomial, we verify the accuracy of our method by computing the eigenvalues of selected problems from both~\cite{Bini2005} and~\cite{Plestenjak2006}. The forward error in our method is compared to the forward error in each respective method. 
\item   For the general matrix polynomial, we verify the stability of our method by solving select problems from the NLEVP package~\cite{Betcke2013} and comparing the backward error in our approximation to the backward error in QUADEIG.
\item   For the general matrix polynomial, we verify the accuracy of our method by comparing the forward error in our approximations to those from QUADEIG for select problems from the NLEVP package~\cite{Betcke2013}. 
\end{itemize}

\begin{minipage}{\linewidth}
\centering
\includegraphics[width=0.75\textwidth]{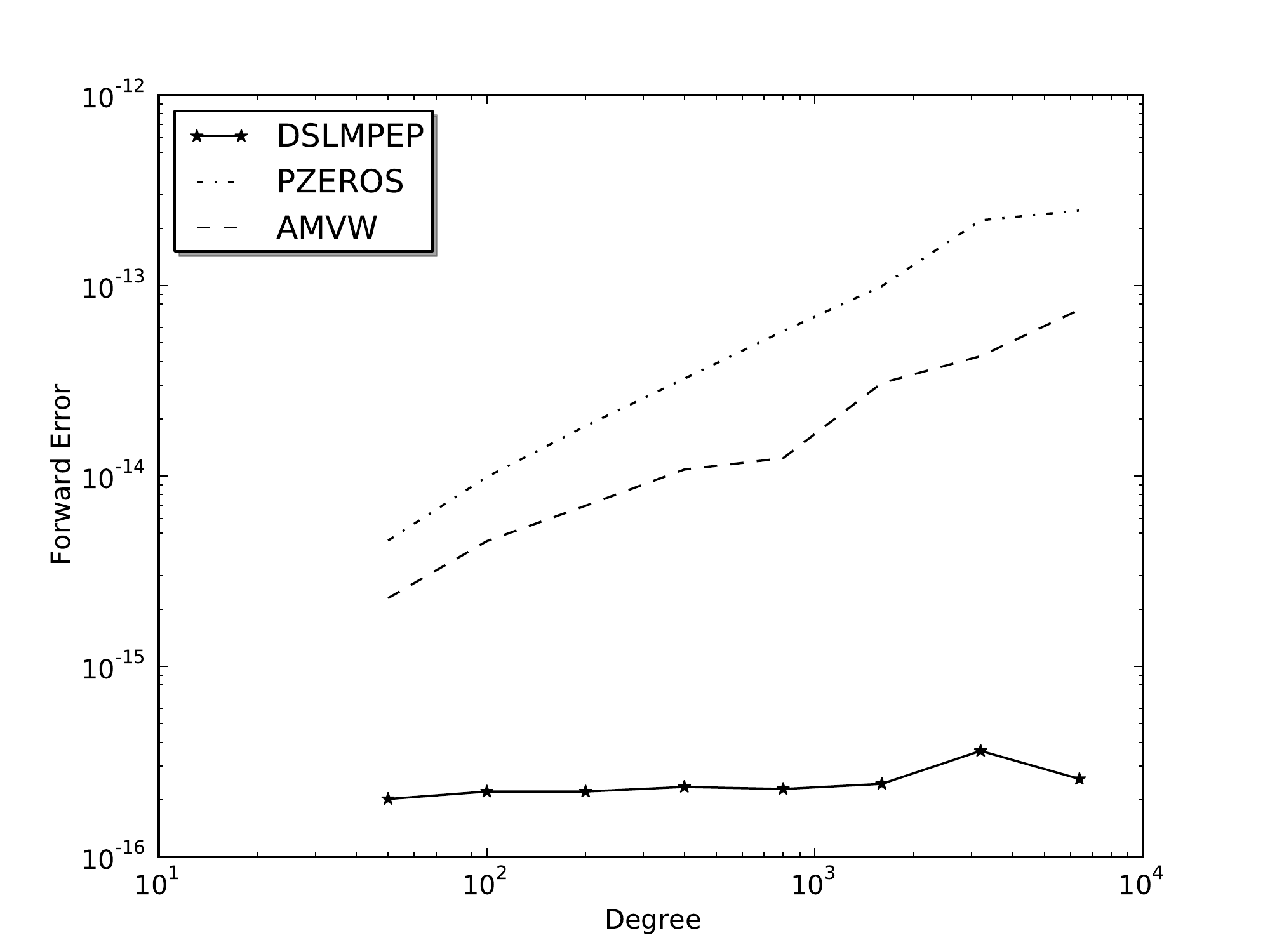}
\captionof{figure}{Test of the maximum forward error in the approximation of the roots of a scalar polynomial of degree $d$. The tests are averaged over $5$ runs and average forward errors are reported for POLZEROS and AMVW.}
\end{minipage}

\begin{minipage}{\linewidth}
\centering
\includegraphics[width=\textwidth, trim={0, 5.6cm, 0, 5.6cm}, clip]{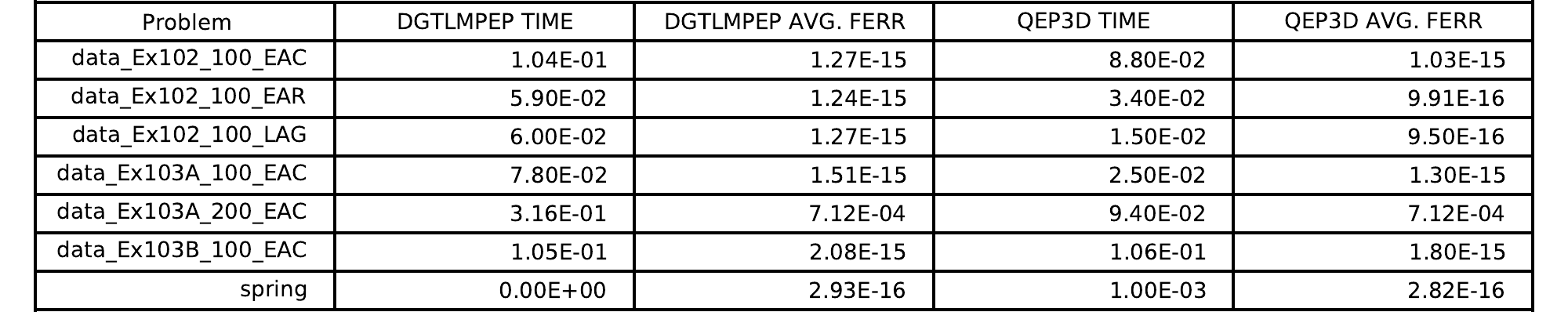}
\captionof{figure}{Average forward error and elapsed time comparisons between our method and a relevant method from the QEP3D package~\cite{Plestenjak2006}. The problems all come from the QEP3D package, with exception to the spring problem from the NLEVP package; note that our method is competitive, even though it was designed to handle more tridiagonal polynomial eigenvalue problems.}
\end{minipage}

\begin{minipage}{\linewidth}
\centering
\includegraphics[width=\textwidth, trim={0, 5.37cm, 0, 5.37cm},clip]{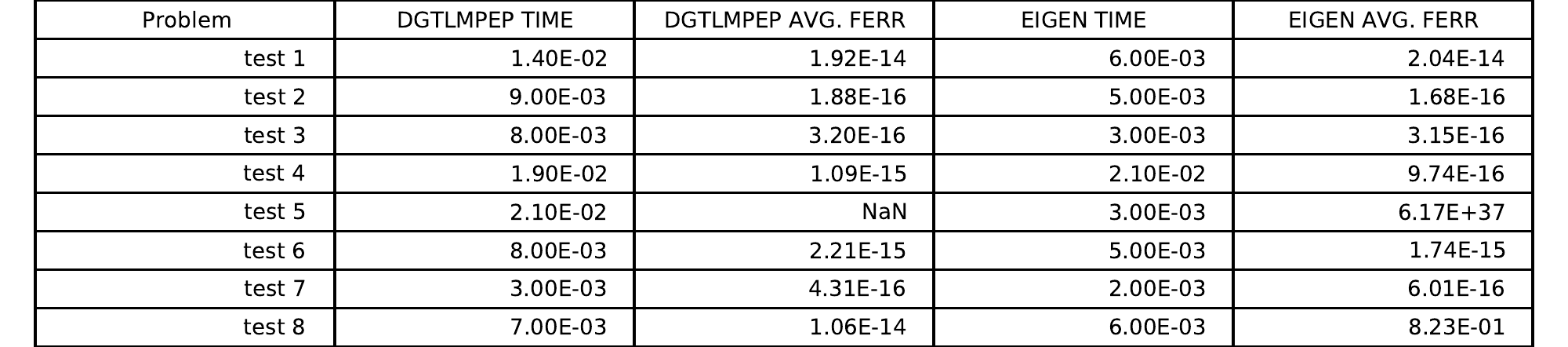}
\captionof{figure}{Average forward error and elapsed time comparisons between our method and the method EIGEN~\cite{Bini2005}. The problems also come from the EIGEN package and it is important to note that our method is competitive even though it was designed for more general tridiagonal polynomial eigenvalue problems.}
\end{minipage}

\begin{minipage}{\linewidth}
\centering
\includegraphics[width=\textwidth,trim={0, 0.65cm, 0, 0.65cm},clip]{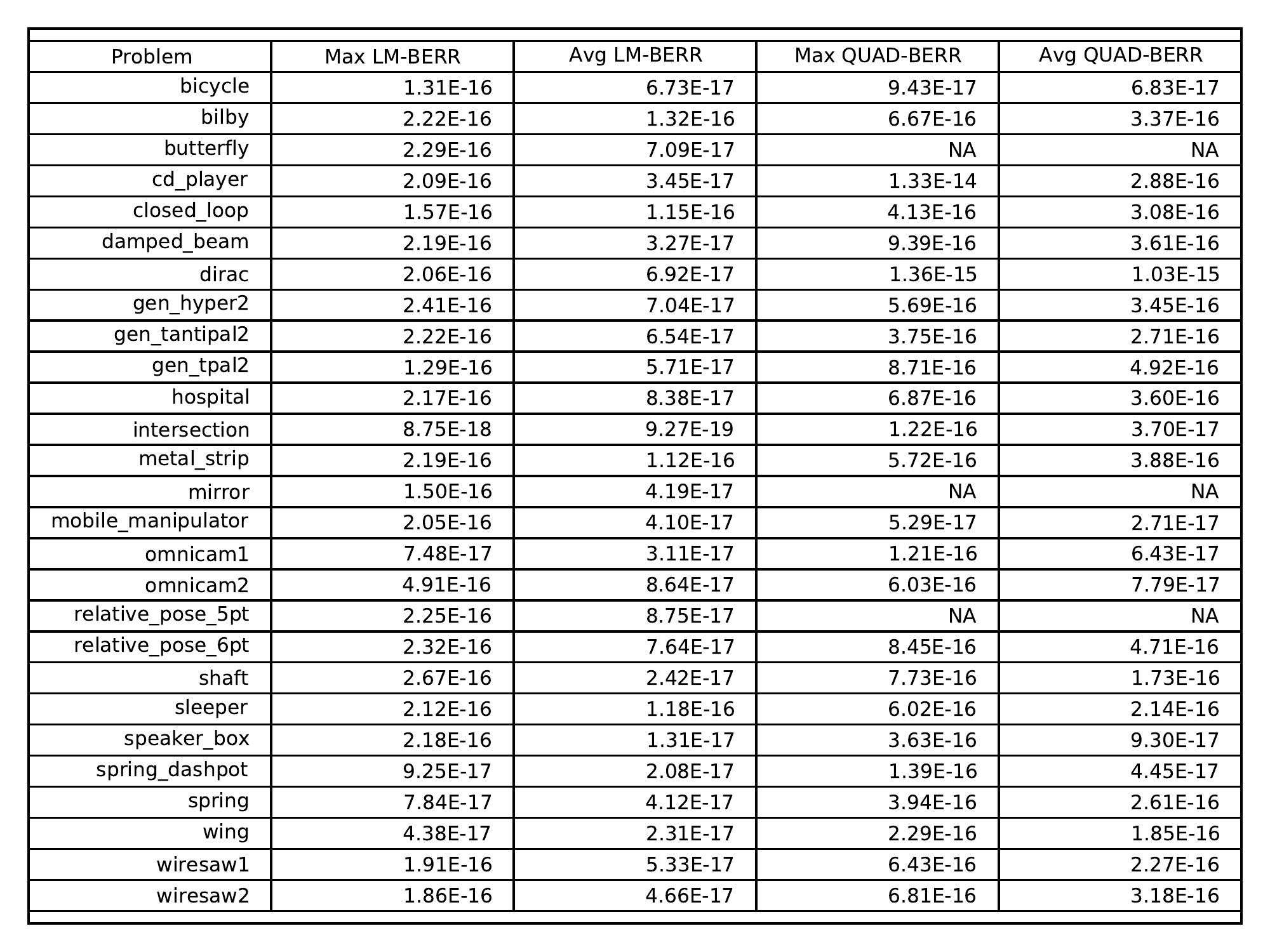}
\captionof{figure}{Comparison of the average and maximum backward error in our method and QUADEIG for many problems from the NLEVP package. The problems which QUADEIG is unable to solve are marked by NA.}
\end{minipage}

\begin{minipage}{\linewidth}
\centering
\begin{minipage}{0.5\textwidth}
\centering
\includegraphics[width=\textwidth]{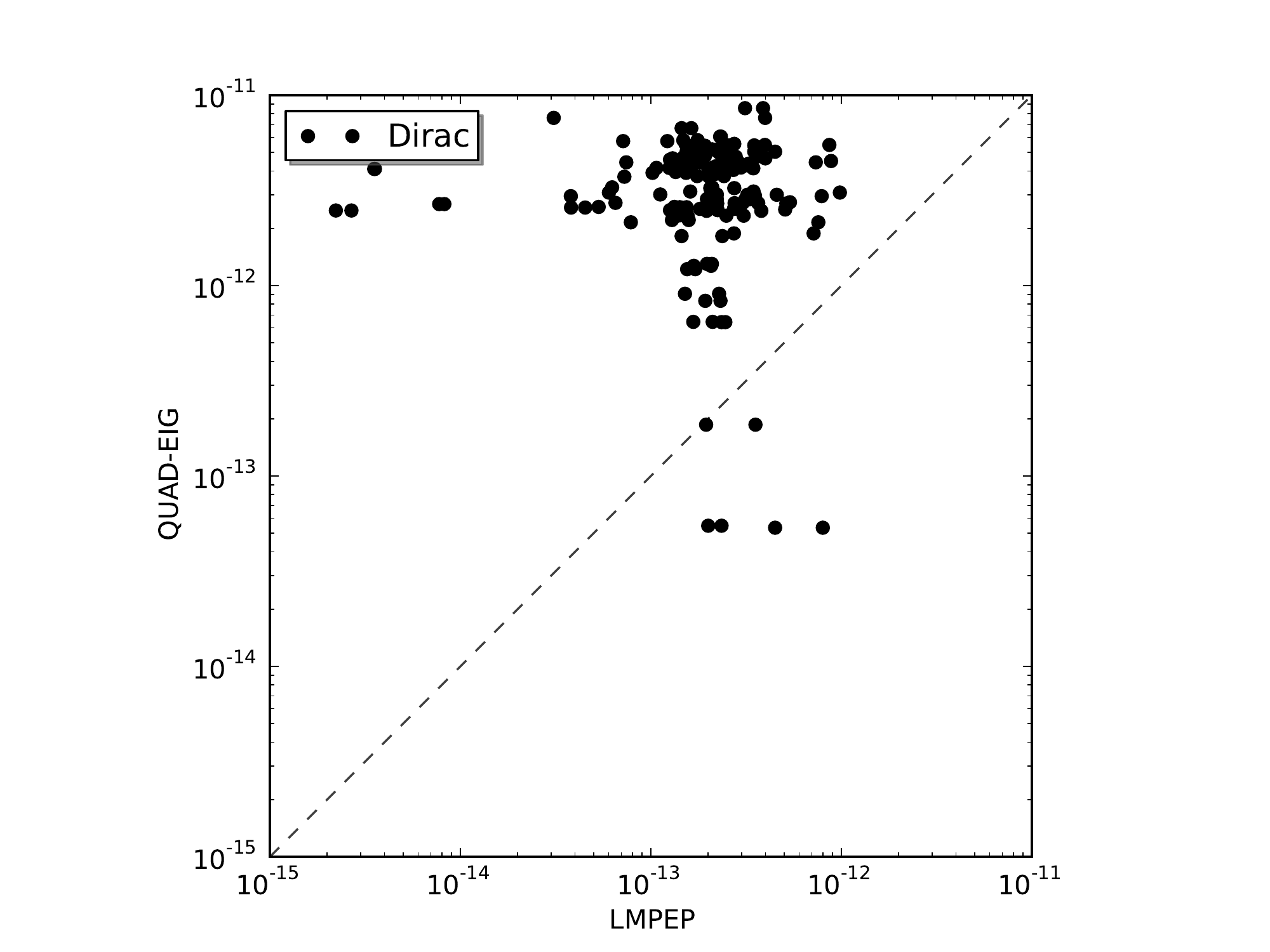}
\end{minipage}\hfill
\begin{minipage}{0.5\textwidth}
\centering
\includegraphics[width=\textwidth]{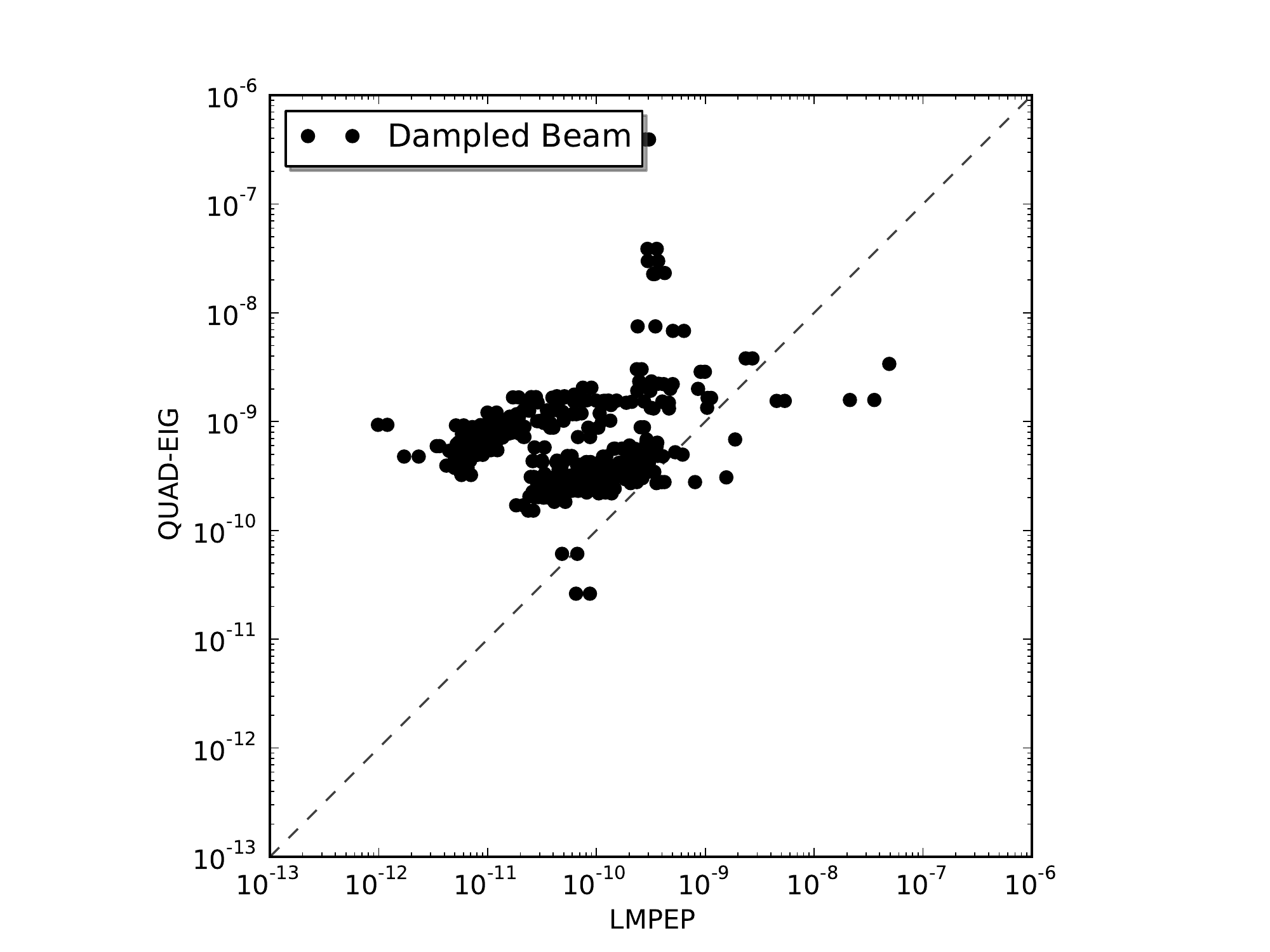}
\end{minipage}
\captionof{figure}{Comparison of the forward error in all eigenvalue approximations between our method and QUADEIG on the dirac and damped\_beam problem from the NLEVP package.}
\end{minipage}

\begin{minipage}{\linewidth}
\centering
\begin{minipage}{0.5\textwidth}
\centering
\includegraphics[width=\textwidth]{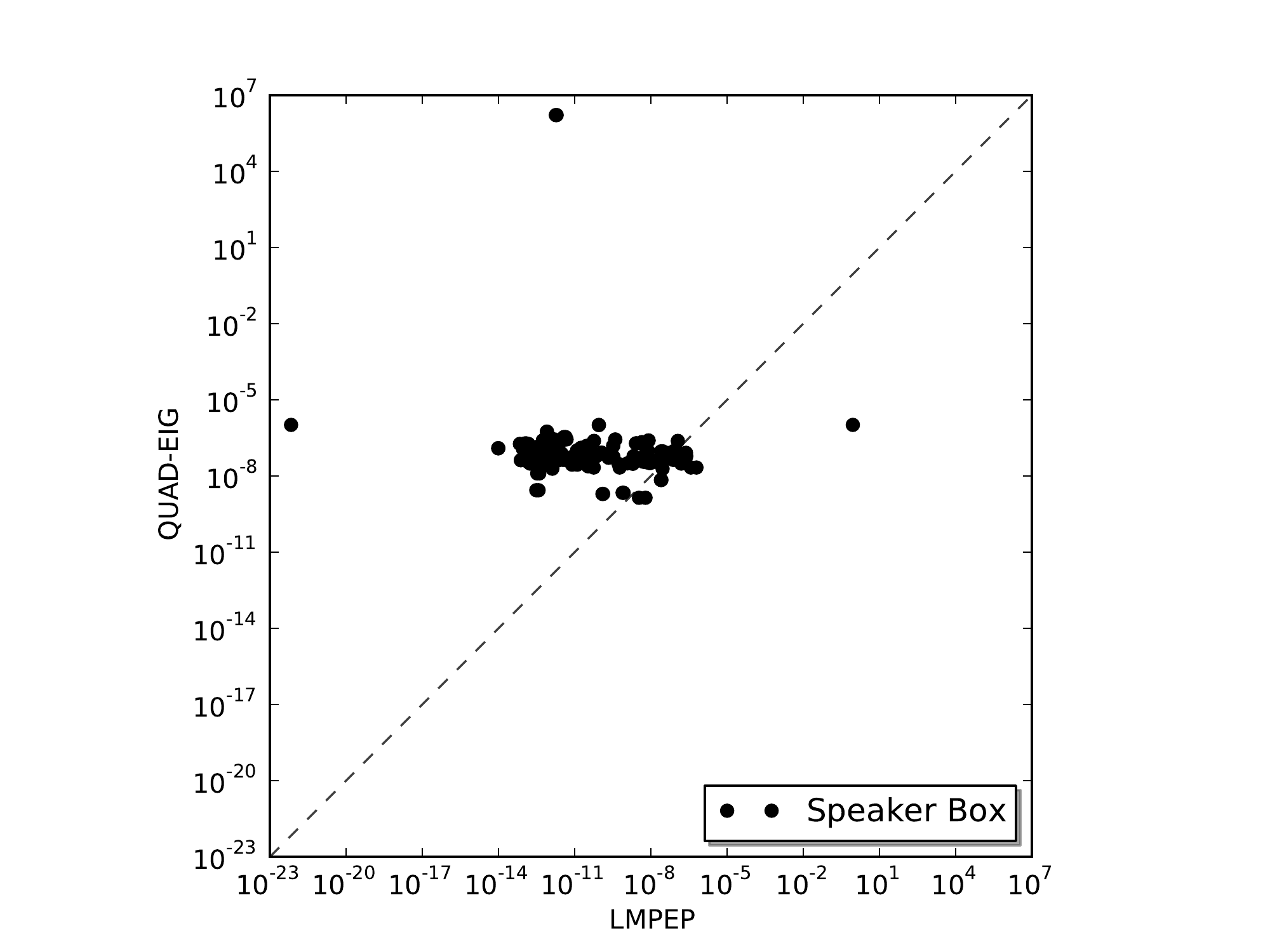}
\end{minipage}\hfill
\begin{minipage}{0.5\textwidth}
\centering
\includegraphics[width=\textwidth]{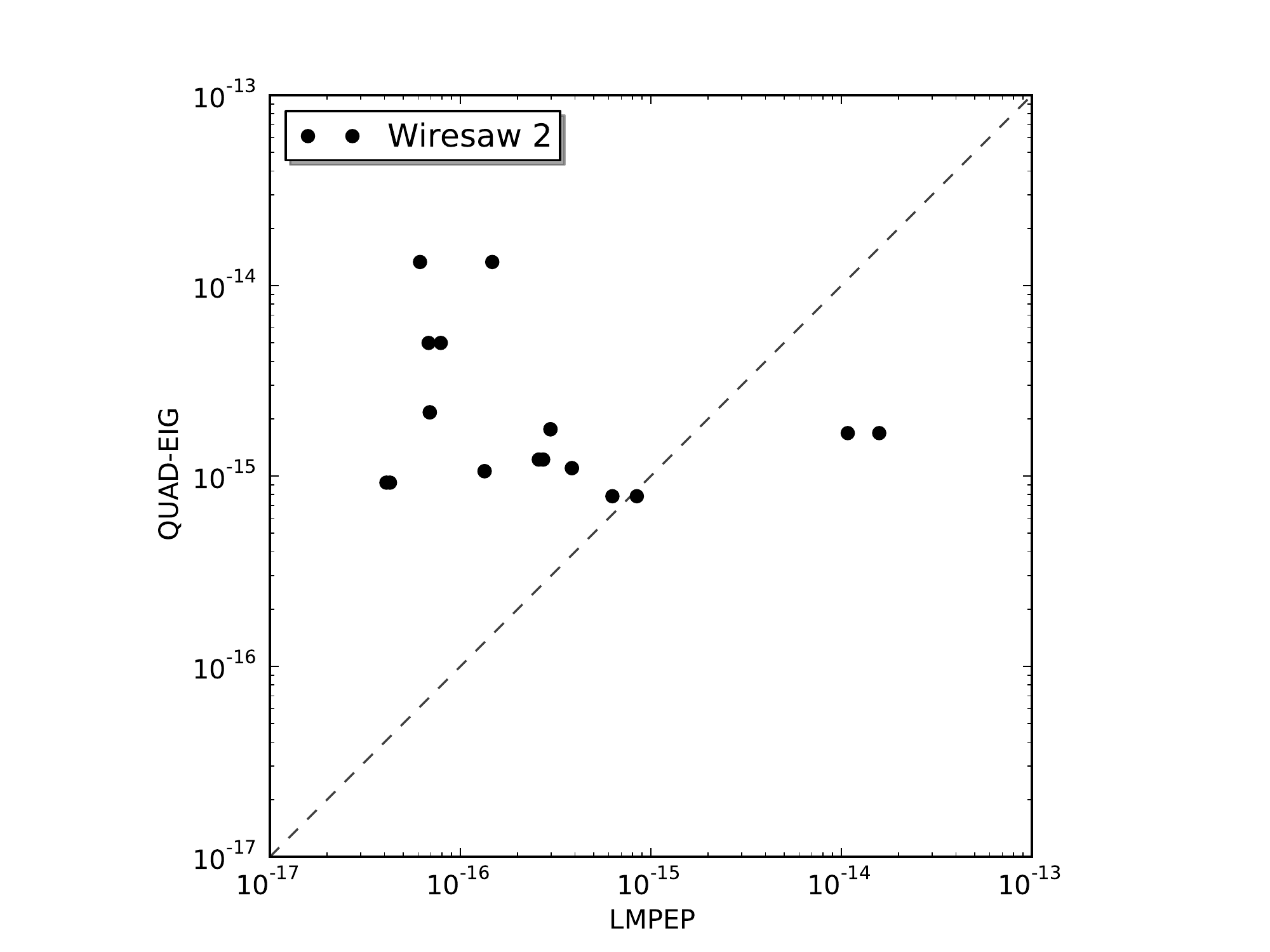}
\end{minipage}
\captionof{figure}{Comparison of the forward error in all eigenvalue approximations between our method and QUADEIG on the speaker\_box and wiresaw2 problem from the NLEVP package.}
\end{minipage}

\section{Conclusion}
Leveraging the inherent strengths of Laguerre's method and the numerical range, we have proposed a versatile, stable, and efficient method for solving the polynomial eigenvalue problem; supported by numerical experiments. Furthermore, we have demonstrated the effectiveness of our initial estimates~(\S\ref{subsec:IE}), as well as the robustness~(\S\ref{sec:rao}) and backward stability~(\S\ref{sec:bs}) of our method. 

To our knowledge, we are the first to utilize Laguerre's method with such generality as to cover such a large range of polynomial eigenvalue problems. Our method is also alone in its use of the numerical range for initial estimates. In Section~\ref{subsec:IE} we argue that these initial estimates adhere naturally to the geometry of the spectrum and we show that under suitable conditions they are no bigger in absolute value than the upper Pellet bound~(Theorem~\ref{thm:Pellet}). 

Implemented in the FORTRAN package LMPEP, numerical results attest to our method's computational complexity of $O(d^{2})$ in the scalar case, $O(d^{2}n^{2})$ in the tridiagonal case, $O(dn^{3}+d^{2}n^{3})$ in the Hessenberg case, and $O(dn^{4}+d^{2}n^{3})$ in the general case. Moreover, numerical results verify the backward stability of our method and exhibit its unprecedented level of accuracy. We are eagerly awaiting the formal release of the complete code in~\cite{Aurentz2016}, so that we can make additional comparisons to our method, especially for solving large degree polynomial eigenvalue problems. 

It would be remiss not to mention some open questions and areas worth exploration. In Theorem~\ref{thm:SAMPIE}, we show that roots of the quadratic form, under a vector of unit length, are no bigger in absolute value than the upper Pellet bound. We conjecture that they are also no smaller than the lower Pellet bound, but at this time are unable to produce a proof. We also conjecture that there are easily constructible vectors $x$ that such that the corresponding quadratic forms $x^{*}P(\lambda)x$ are distinct and their roots are within some minimal distance of the eigenvalues of $P(\lambda)$. However, we know of no such construction at the time of this writing.

In summary, we have proposed a new method for solving the polynomial eigenvalue problem that is strong in its virtues, capable of high degrees of accuracy, relatively unconstrained in its domain of operability, and promising in its possibility for future advancements.

\section{Acknowledgments}
The authors wish to acknowledge conversations with David Watkins and Dario Bini which helped construct the ideas in this paper, and we wish to thank Zdenek Strakos and an anonymous referee whose comments helped improve this paper. 


%

\end{document}